\theoremstyle{plain}
\newtheorem{theorem}{Theorem}
\newtheorem{lemma}[theorem]{Lemma}
\theoremstyle{definition}
\newtheorem{definition}[theorem]{Definition}
\newtheorem{remark}[theorem]{Remark}
\numberwithin{equation}{section}
\numberwithin{theorem}{section}
\author{Qian Gao, Qun Wang, Xiaojun Chang}
\title{Normalized ground state solutions of Schr\"odinger-KdV system in $\mathbb{R}^3$}
\date{}
\begin{document}

\maketitle

\begin{abstract}
\noindent In this paper, we study the coupled Schr\"odinger-KdV system
 \begin{align*}
\begin{cases}
	-\Delta u +\lambda_1 u=u^3+\beta uv~~&\text{in}~~\mathbb{R}^{3}, \\-\Delta v +\lambda_2 v=\frac{1}{2}v^2+\frac{1}{2}\beta u^2~~&\text{in}~~\mathbb{R}^{3}
\end{cases}
\end{align*}
subject to the mass constraints
\begin{equation*}
\int_{\mathbb{R}^{3}}|u|^2 dx=a,\quad \int_{\mathbb{R}^{3}}|v|^2 dx=b,
\end{equation*}
 where $a, b>0$ are given constants, $\beta>0$, and the frequencies $\lambda_1,\lambda_2$ arise as Lagrange multipliers. The system  exhibits
$L^2$-supercritical growth. Using a novel constraint minimization approach, we demonstrate the existence of a local minimum solution to the system. Furthermore, we establish the existence of normalized ground state solutions.
\end{abstract}

\medskip

{\small \noindent \text{Key Words:}  Normalized solutions; $L^2$-supercritical growth;  Schr\"odinger-KdV System; Variational methods.\\
\text{Mathematics Subject Classification:} 35J20, 35J60, 35Q55.}

\medskip

\section{Introduction and main results}\label{intro}

We consider the following coupled Schr\"odinger-KdV system
\begin{align}
\begin{cases}
	if_t+f_{xx}+\beta fg+|f|^2f=0,& \\g_t+g_{xxx}+gg_{x}+\frac{1}{2}\beta (|f|^2)_x=0,&      \label{1.2+}
\end{cases}
\end{align}
where $f=f(x,t)$ is a complex-valued function, $g=g(x,t)$ is real-valued, $\beta$ is a real constant.
System (\ref{1.2+}) appears as models for interaction phenomena between long waves and short
waves, for example, resonant interaction between long and short capillary-gravity water waves, we refer the reader to \cite{fujia5,fujia6}, and the references therein.

When seeking the solitary wave solutions of (\ref{1.2+}) in the form
\begin{align*}
	f(x,t)=e^{i(\omega t+\kappa x)}u(x-ct),\quad g(x,t)=v(x-ct),
\end{align*}
and setting $\lambda_1=\kappa^2+\omega$, $\lambda_2=c=2\kappa$, the pair $(u,v)$ solves the following stationary Schr\"odinger-KdV system
\begin{align}\label{1.1}
\begin{cases}
	-\Delta u +\lambda_1 u=u^3+\beta uv~~&\text{in}~~\mathbb{R}^{N}, \\-\Delta v +\lambda_2 v=\frac{1}{2}v^2+\frac{1}{2}\beta u^2~~&\text{in}~~\mathbb{R}^{N}.
\end{cases}
\end{align}
Several authors have studied bound state and ground state solutions of (\ref{1.1}) when the frequencies $\lambda_1, \lambda_2$ are fixed, see, for example, \cite{A2003,B2018,E2015,C2017,E2023,L2013}.

This paper focuses on studying solutions to \eqref{1.1} with $N=3$ under the mass constraints
\begin{equation}\label{1.2}
\int_{\mathbb{R}^{3}}|u|^2 dx=a,\quad \int_{\mathbb{R}^{3}}|v|^2 dx=b,
\end{equation}
where $a, b>0$ are prescribed constants. It is standard that the solutions of \eqref{1.1}-\eqref{1.2} can be obtained as critical points of the energy functional
$$J(u,v)=\frac{1}{2}\int_{\mathbb{R}^{3}}|\nabla u|^2+|\nabla v|^2dx-\frac{1}{4}\int_{\mathbb{R}^{3}}u^4dx-\frac{1}{6}\int_{\mathbb{R}^{3}}v^3dx-\frac{1}{2}\beta\int_{\mathbb{R}^{3}}u^2vdx$$
on the constraint set $$S(a,b)=\left\{(u,v)\in H^1(\mathbb{R}^{3})\times H^1(\mathbb{R}^{3}):\int_{\mathbb{R}^{3}}|u|^2 dx=a,\quad\int_{\mathbb{R}^{3}}|v|^2 dx=b\right\}.$$
In this context, $\lambda_1, \lambda_2$ in (\ref{1.1}) appear as Lagrange multipliers and are prior unknown.

We recall the nonlinear Schr\"odinger equation
\begin{equation}\label{1.3}
	\begin{cases}
	-\Delta u+\lambda u=f(u),\quad  u\in H^{1}(\mathbb{R}^N),\\\int_{\mathbb{R}^N} |u|^2 dx=c>0.
	\end{cases}
\end{equation}
It is well known that the solutions of \eqref{1.3} depend on the asymptotic behavior of $f$ at infinity. Assume that $f(u)\sim|u|^{p-2}u$ as $|u|\rightarrow\infty$. Typically, when $2<p<2+\dfrac{4}{N}$, we say the equation satisfies $L^2$-subcritical growth; when $p=2+\dfrac{4}{N}$, the equation satisfies $L^2$-critical growth; when $2+\dfrac{4}{N}<p<2^*:=\dfrac{2N}{N-2}$, the equation satisfies $L^2$-supercritical growth. For studies on normalized solutions of (\ref{1.3}) with $L^2$-subcritical growth or $L^2$-critical growth, one may refer to \cite{LT1982,GS2014,H2019,M2014}. For the  $L^2$-supercritical growth case, see \cite{Alves2022,BdeV2013,B2017,B2021,CLY2023,IT2019,J1997,JJLV2022,JL2022,M2022,NS2020,S2020,W2022} and the references therein.

In 2016, Bartsch, Jeanjean and Soave \cite{BJS2016} studied the existence of normalized solutions for the nonlinear Schr\"odinger system
\begin{align*}	
\begin{cases}
	-\Delta u +\lambda_1 u=\mu_1u^3+\beta uv^2~~&\text{in}~~\mathbb{R}^{3}, \\-\Delta v+\lambda_2 v=\mu_2v^3+\beta u^2v~~&\text{in}~~\mathbb{R}^{3}
\end{cases}
\end{align*}
satisfying the constraint condition (\ref{1.2}), where $\mu_1,\mu_2,a,b$ are positive fixed quantities. They demonstrated that positive normalized solutions exist for different ranges of the coupling parameter $\beta>0$.
Bartsch and Jeanjean \cite{BJ2018} investigated the normalized solutions of a more general Schr\"odinger system
\begin{align}\label{1.5}	
\begin{cases}
	-\Delta u_1 =\lambda_1 u_1+\mu_1|u_1|^{p_1-2}u_1+\beta r_1 |u_1|^{r_1-2}|u_2|^{r_2}u_1~~&\text{in}~~\mathbb{R}^{N}, \\
-\Delta u_2=\lambda_2 u_2+\mu_2|u_2|^{p_2-2}u_2+\beta r_2 |u_1|^{r_1}|u_2|^{r_2-2}u_2~~&\text{in}~~\mathbb{R}^{N},
\end{cases}
\end{align}
under the same mass constraint condition, where $N\geq2,p_1,p_2\in(2,2^*)$, and $\beta,\mu_1,\mu_2,r_1,r_2,a,b>0$ with $2\leq r_1+r_2<2^*$. They addressed the case of $2<p_1<2+4/N<p_2,2+4/N<r_1+r_2<2^*,r_2>2$ in dimensions $2\leq N\leq 4$, and the scenarios where all of these values $p_1,p_2,r_1+r_2$ are either less than $2+4/N$ or all are greater than $2+4/N$, in the space $H^1_{rad}(\mathbb{R}^N)\times H^1_{rad}(\mathbb{R}^N)$. Subsequently, Gou and Jeanjean \cite{GJ2018}  extended these results to cover two new ranges of parameters. They considered the existence of multiple positive solutions to (\ref{1.5}) when $N\geq1,2<p_1,p_2<2+4/N,r_1,r_2>1,2+4/N<r_1+r_2<2^*$, and when $N\geq1,2+4/N<p_1,p_2<2^*,r_1,r_2>1,r_1+r_2<2+4/N$. In both scenarios, they obtained a local minimizer and a mountain pass solution, provided that $\beta>0$ was sufficiently small.
Recently, Jeanjean, Zhang and Zhong \cite{J2023} explicitly identified new ranges of $\beta>0$ for which there exists a solution to Schr\"odinger system (\ref{1.5}) in the case when $0<u,v\in H^1(\mathbb{R}^N), 1\leq N\leq4, 2+\frac{4}{N}<p,\ q,\ r_1+r_2<2^*$.
Moreover, for further discussions on normalized solutions for nonlinear Schr\"odinger systems, one may refer to \cite{BZ2021,G2016,G2019,K1895,LWT2022,N2019} and the references therein.

Since competing quadratic and cubic nonlinearities is a general physical phenomenon, it is important to understand the effect of such competition on normalized solutions. In 2022, Luo et al. \cite{LWY2022} provided a comprehensive study on the existence and non-existence of solutions to the following system
\begin{align}\label{1.6}
\begin{cases}
	-\Delta u +\lambda_1 u=\mu_1u^3+\rho  uv^2+\beta uv~~&\text{in}~~\mathbb{R}^{N}, \\-\Delta v+\lambda_2 v=\mu_2v^3+\rho u^2v+\frac{\beta}{2}u^2~~&\text{in}~~\mathbb{R}^{N}
\end{cases}
\end{align}
under the mass condition (\ref{1.2}), where $N\geq1,\mu_1,\mu_2,\rho,a,b>0$ and $\beta\in\mathbb{R}$. In the one-dimensional case, they found that normalized ground states exist and are obtained as global minimizers. For $N=2$, under suitable conditions on $a$ and $b$, they proved the existence
of normalized solutions. For $N=3$, they showed that at least two normalized solutions exist, one being a ground state and the other an excited state. Furthermore, they established the existence of a normalized ground state when $N=4$ and $\rho>0$, and they obtained non-existence results for $N\geq4$ when $\rho<0$.

In this paper, we are concerned with the normalized ground state solution for \eqref{1.1}, which is defined as follows.
\begin{definition}
We say that $(u_1,v_1)\in S(a,b)$ is a normalized ground state solution of \eqref{1.1} if it is a solution having minimal energy among all the solutions which belongs to $S(a,b)$, i.e.,
$$dJ|_{S(a,b)}(u_1,v_1)=0,\quad J(u_1,v_1)=\inf\left\{J(u,v):dJ|_{S(a,b)}(u,v)=0,(u,v)\in S(a,b)\right\} .$$	
\end{definition}

In 2010, Dias, Figueira and Oliveira \cite{D2010} investigated the solutions for the Schr\"odinger-KdV system
\begin{align}
\begin{cases}
	-\phi^{\prime\prime}+\lambda_1\phi=\phi^3+\beta\phi\psi,& \\-\psi^{\prime\prime}+\lambda_2\psi=\frac{1}{2}\psi^2+\frac{\beta}{2}\phi^2
\end{cases}
\nonumber\end{align}
restricted to $$\mathbb{X}_{\mu}=\{(u,v)\in H^1(\mathbb{R})\times H^1(\mathbb{R}):\int_{\mathbb{R}}(|u|^2+|v|^2)dx=\mu\}.$$ Since $N=1$, the system exhibits $L^2$-subcritical growth and the corresponding energy functional is bounded from below on $\mathbb{X}_{\mu}$. By employing a constraint minimization method, they established the existence of normalized ground state solutions.

Recently, Liang, Wu and Tang \cite{LWT2022} studied the existence of normalized ground state solutions for the system \eqref{1.1}-\eqref{1.2} in $\mathbb{R}^{N}$ for $N=1,2$. In their studies, system \eqref{1.1} displays $L^2$-subcritical for $N=1$ and $L^2$-critical growth for $N=2$. They explored properties of the function $m(a,b):=\inf\limits_{(u,v) \in S(a,b)}J(u,v)$ and proved that for any $\beta>0$, a solution  $(\lambda_1,\lambda_2,\tilde{u},\tilde{v})$ to system \eqref{1.1} exists with $\lambda_1,\lambda_2>0$ and $\tilde{u},\tilde{v}$ being radially symmetric when $a, b>0$ if $N=1$; and for $0<a<\bar{a}$, $b>0$ if $N=2$, where $\bar{a}$ depends only on $N$.

In the present paper, our aim is to seek normalized ground state solutions for the coupled Schrodinger-Kdv system in $\mathbb{R}^3$ within the constraint set $S(a,b)$. It is worth noting that
 for $N=3$, the system \eqref{1.1} exhibits $L^2$-supercritical growth, implying that $J$ is unbounded from below on $S(a,b)$. As a result, the
conventional constraint minimization arguments for $J|_{S(a,b)}$ used in \cite{LWT2022} do not ensure the existence of normalized ground state solutions. Inspired by the work of Jeanjean et al. \cite{JJLV2022}, we will develop a novel constrained minimization approach for $J$ and establish the existence of normalized ground state solutions for system \eqref{1.1}.

Throughout the paper, we assume that $N=3$.
Now, let's state our main results.	
\begin{theorem}\label{thm: main ex}
For any $a>0$, there exists $\bar{b}>0$ such that, for all $b\in(0, \bar{b})$, system \eqref{1.1} possesses a normalized solution $(\tilde{u},\tilde{v})$ associated with some $\lambda_1>0$ and $\lambda_2>0$, which acts as a local minimizer of the constraint functional $J|_{S(a,b)}$. Moreover, system \eqref{1.1} has a normalized ground state solution $(u^*,v^*)$ for some $\lambda_1^*>0$ and $\lambda_2^*>0$, and $\tilde{u},\tilde{v},u^*,v^*$ are radially symmetric.
\end{theorem}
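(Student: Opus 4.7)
The plan is to exploit two distinct variational structures of $J|_{S(a,b)}$, both rendered tractable by restricting to radially symmetric functions so that the compact embedding $H^1_{\mathrm{rad}}(\R^3)\hookrightarrow L^p(\R^3)$ for $p\in(2,6)$ controls all nonlinear terms. The local minimizer $(\tilde u,\tilde v)$ will come from a constrained minimization inside a suitable ball in $H^1\times H^1$; the ground state $(u^*,v^*)$ will come from minimizing $J$ on the Pohozaev manifold associated with the $L^2$-preserving scaling $u_t(x)=t^{3/2}u(tx)$, $v_t(x)=t^{3/2}v(tx)$.

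Set $A(u,v):=\|\nabla u\|_2^2+\|\nabla v\|_2^2$. The Gagliardo--Nirenberg inequalities in $\R^3$ yield, on $S(a,b)$,
\begin{equation*}
J(u,v)\geq g(A(u,v)):=\tfrac{1}{2}A-K_1 a^{1/2}A^{3/2}-K_2(a,b,\beta)\,A^{3/4},
\end{equation*}
with $K_2\to 0$ as $b\to 0^+$. For $b$ below a threshold $\bar b$ depending only on $a$ and $\beta$, the function $g$ admits a strict negative local minimum at some $A_->0$ and is strictly positive on an interval $(A_1,A_2)$ with $A_-<A_1$. Fix $R\in(A_1,A_2)$ and set $V_R:=\{(u,v)\in S(a,b):A(u,v)\leq R\}$; then $g(A_-)\leq\inf_{V_R}J<0<g(R)\leq\inf_{\partial V_R}J$, the upper bound on $\inf_{V_R}J$ coming from a scaled test function. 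Schwarz symmetrization preserves $S(a,b)$, does not increase $A$, leaves $\|u\|_4^4$ and $\|v\|_3^3$ invariant, and (by Hardy--Littlewood--Riesz, using $\beta>0$) does not increase $J$; thus a minimizing sequence may be taken radial. Radial compactness then passes the nonlinear terms to the limit, and the strict interior separation places the weak limit $(\tilde u,\tilde v)$ in the interior of $V_R$, making it a critical point of $J|_{S(a,b)}$. Strong $H^1$ convergence follows a posteriori from the energy identity. Testing the Euler--Lagrange system against $(u,v)$, combined with the Pohozaev identity below and with separate $L^2$-preserving dilations of each component, yields $\lambda_1,\lambda_2>0$.

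For the ground state, differentiating $t\mapsto J(u_t,v_t)$ at $t=1$ produces the Pohozaev identity
\begin{equation*}
P(u,v):=A(u,v)-\tfrac{3}{4}\|u\|_4^4-\tfrac{1}{4}\|v\|_3^3-\tfrac{3\beta}{4}\int_{\R^3}u^2v\,dx=0,
\end{equation*}
satisfied by every critical point of $J|_{S(a,b)}$; hence the Pohozaev manifold $\mathcal{P}(a,b):=\{(u,v)\in S(a,b):P(u,v)=0\}$ contains every such critical point. Substituting $P=0$ into $J$ gives
\begin{equation*}
J\big|_{\mathcal{P}(a,b)}=\tfrac{1}{6}A-\tfrac{1}{12}\|v\|_3^3-\tfrac{\beta}{4}\int_{\R^3}u^2v\,dx\geq\tfrac{1}{6}A-K_3(a,b,\beta)A^{3/4},
\end{equation*}
so $J|_{\mathcal{P}(a,b)}$ is coercive in $A$ and bounded below. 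A radial minimizing sequence is $H^1$-bounded, converges strongly in $L^4\times L^3$ by radial compactness, and its weak limit $(u^*,v^*)\in\mathcal{P}(a,b)$ attains $\inf_{\mathcal{P}(a,b)}J$. Verifying that $\mathcal{P}(a,b)$ is a natural constraint at $(u^*,v^*)$, i.e.\ that the Lagrange multiplier attached to $P=0$ vanishes, is done by evaluating the transverse second derivative of $J$ along the scaling curve, which is nondegenerate at the minimizer. Since every critical point of $J|_{S(a,b)}$ lies on $\mathcal{P}(a,b)$, $(u^*,v^*)$ is a normalized ground state, with $\lambda_1^*,\lambda_2^*>0$ by the same componentwise-scaling argument.

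The central obstacle is securing strong compactness of both minimizing sequences in the $L^2$-supercritical regime: in the full space $H^1\times H^1$ the supercritical $\|u\|_4^4$ term defeats the standard weak-limit arguments. The radial restriction is therefore essential, and Palais's principle of symmetric criticality promotes the radial critical points to critical points of the full functional. Secondary but delicate points are preserving the $L^2$-masses in the weak limits (addressed via strict subadditivity of the levels $\inf_{V_R}J$ and $\inf_{\mathcal{P}(a,b)}J$ under nontrivial splittings of $(a,b)$), verifying the natural-constraint property of $\mathcal{P}(a,b)$ in the coupled setting, and establishing the strict positivity of each individual $\lambda_i$.
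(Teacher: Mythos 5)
Your first half (the local minimizer) is essentially the paper's own construction: the same Gagliardo--Nirenberg separation between a small ball and an annulus in $S(a,b)$ (Lemma \ref{Local}), with the radial restriction that the paper itself anticipates as a simplification in its remarks. The genuine gap is in your treatment of mass preservation, which is exactly where the paper's work lies. Your appeal to ``strict subadditivity of the levels under nontrivial splittings of $(a,b)$'' fails at precisely the splittings that matter: the local level $m(a,b)=\inf_{S(a,b)\cap B_{6b}}J$ is defined only for $b>0$, because the radius of the constraining ball is tied to $b$, so the subadditivity of Lemma \ref{3.1.4} is available only for splittings $(c,d)+(a-c,b-d)$ with $0<d<b$. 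The boundary case in which the (radial) weak limit retains all of the $v$-mass but loses $u$-mass --- the splitting $(a',b)+(a-a',0)$ --- cannot be reached by any level-comparison argument, since the level with zero $v$-mass does not exist in this framework (and $J\geq 0$ on such pairs in the small ball, so no strict decrease is available). The paper needs two ideas absent from your outline: in the interior case, strict subadditivity is extracted from Shibata's joint rearrangement $\{u,v\}^*$ applied to two \emph{attained} local minimizers, using the strict gradient inequality of Lemma \ref{2.0.10}(iv) for positive radial non-increasing functions; and in the case $|v|_2^2=b$, the paper abandons level comparison entirely and instead tests the second equation of the system against $v_n-v$, then the first against $u_n-u$, to upgrade weak to strong convergence (Case 3 in the proof of Lemma \ref{3.2.1}). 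Without some substitute for these steps, nothing in your argument excludes $|u^\ast|_2^2<a$ in the radial weak limit.

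Your second half takes a genuinely different route from the paper --- minimizing $J$ on the Pohozaev manifold $\mathcal{P}(a,b)=\{(u,v)\in S(a,b):P(u,v)=0\}$ in the style of \cite{JJLV2022} --- and this is the route the paper explicitly flags as problematic in the remark following Lemma \ref{pohozaev}. Two concrete obstructions: (i) since the fiber map $\Psi(t)=J(t^{3/2}u(t\cdot),t^{3/2}v(t\cdot))$ mixes three homogeneities $t^2$, $t^{3/2}$, $t^3$, the manifold decomposes as $\mathcal{P}^+\cup\mathcal{P}^0\cup\mathcal{P}^-$ with up to two critical points per fiber, and the natural-constraint step requires showing the minimizer avoids $\mathcal{P}^0$; your claim that the transverse second derivative ``is nondegenerate at the minimizer'' is the very thing to be proven, and in the coupled setting with these exponents no proof is sketched. (ii) Along a radial minimizing sequence on $\mathcal{P}(a,b)$ the weak limit satisfies only $P(u^*,v^*)\leq 0$ (the gradient term is merely lower semicontinuous) and may again have deficient masses, so membership in $\mathcal{P}(a,b)$ does not pass to the limit, and the rescaling repair is delicate when $t=1$ sits beyond the fiber's local maximum. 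The paper sidesteps both difficulties by minimizing $J$ directly over the solution set $K_{a,b}$, using the Pohozaev identity only to obtain coercivity and boundedness below of $J$ on $K_{a,b}$ (Lemma \ref{qiangzhi}), identifying $\tilde m(a,b)=m(a,b)$ for small $\bar b$, and recycling the compactness lemma; adopting that device would close this gap. Finally, your mechanism for $\lambda_1,\lambda_2>0$ (testing plus Pohozaev plus componentwise dilations) is not a proof as stated; the clean route is the paper's Liouville argument (Lemmas \ref{Liouville}--\ref{2.0.13}): if $\lambda_1\leq 0$ then $-\Delta u\geq 0$ with $u\in H^1(\mathbb{R}^3)$ forces $u\equiv 0$.
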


\begin{remark}
It should be emphasized that we are currently unable to confirm whether the local minimizer is the normalized ground state solution.  Consequently, the solutions $(\tilde{u}, \tilde{v})$ and $(u^*, v^*)$ that we obtained may either be the same or different.
\end{remark}
\begin{remark}
As shown in Lemma \ref{3.2.1}, to address the issue of lack of compactness, we employ the Schwartz symmetric decreasing rearrangement method in $H^1(\mathbb{R}^3)$. This approach allows us to achieve strong convergence of the minimizing sequence $\{(u_n,v_n)\}$ to a radially symmetric function $(\tilde{u},\tilde{v})$ in $L^p(\mathbb{R}^3)$.  Moreover, the problem can also be tackled in the radially symmetric space, given that $H^1_r(\mathbb{R}^3)\hookrightarrow L^p(\mathbb{R}^3)$  is compact for any $2<p<6$. Thus, the proof of Lemma \ref{3.2.1} will be simplified.
\end{remark}

\begin{remark}
Our conclusion corresponds to system (\ref{1.5}) when $N=3$ with $p_1=4, p_2=3$, $\mu_1=1, \mu_2=\frac{1}{2}$, $r_1=1, r_2=1$. This yields $2<p_2<2+4/N<p_1<2^*$, $r_1=1<r_2$ and $r_1+ r_2<2+4/N$. Alternatively, it corresponds to system (\ref{1.6}) when $N=3$, $\mu_1=1, \mu_2=\frac{1}{2}$ and $\rho=0$. These scenarios have not been previously studied in the existing literature.
\end{remark}

\begin{remark}
Notice that the functional $J$ not only has a locally minimal geometric structure on $S(a,b)$, but also exhibits a mountain pass geometry. It is therefore reasonable to investigate the existence of mountain pass type normalized solutions, which will be explored in our subsequent research.
\end{remark}
We build upon the approach of Jeanjean et al. \cite{JJLV2022} to prove Theorem \ref{thm: main ex}. However, their framework was designed for a single equation, while the Schr\"odinger-KdV system comprises multiple equations, complicating the definition of the auxiliary function $f(c,\rho)$ as used in \cite{JJLV2022}.
Additionally, we encounter the issue that the sum of two elements within a ball may not necessarily remain within the same ball. Thus, new techniques are necessary to investigate the continuity and subadditivity of  $m(a,b)$ as $a$ and $b$ vary. To tackle these obstacles, we consider a small ball with a radius dependent on
$b$ and a large circular ring with a diameter also dependent on
$b$ within $S(a,b)$. By comparing the values of $J$ on the ball and the ring, we obtain a locally minimal geometric structure of the constraint functional $J|_{S(a,b)}$. Consequently, we can define the function $m(a,b)$ on the outer circle of this ring, as detailed in Section \ref{local}. It is important to note that, in analyzing the subadditivity of $m(a,b)$ as done in \cite{LWT2022}, it is crucial to define $m(a,b)$ for $a\ge0$ and $b\ge0$. However, currently, $m(a,b)$ is only defined for $a\ge0$ and $0<b<\bar{b}$. To restore compactness, we incorporate information from the system itself and then establish the existence of local minimizers. Finally, using minimization arguments on the solutions set, we give the existence of normalized ground state solutions.

The paper is organized as follows.
In Section \ref{preliminary}, we introduce some preliminaries and lemmas that will be utilized throughout the paper. Section \ref{local} is dedicated to analyzing the subadditivity of the functional concerning the prescribed mass, followed by the demonstration of the existence of normalized solutions serving as local minimizers for the functional, thereby establishing Theorem \ref{thm: main ex}. In Section \ref{ground}, we establish the existence of normalized ground state solutions for the system and complete the proof of Theorem \ref{thm: main ex}.

\section{Preliminary results}\label{preliminary}\setcounter{equation}{0}

For any $1\leq r<+\infty$, we denote by $L^r(\mathbb{R}^3)$ the standard Lebesgue space equipped with the norm $\|u\|_r=\left(\int_{\mathbb{R}^3}|u(x)|^r dx\right)^{\frac{1}{r}}$. Denote by $H^1(\mathbb{R}^{3})$ the usual Hilbert space with the scalar product and norm defined as follows:
$$
			\langle u,v\rangle=\int_{\mathbb{R}^3}\big(\nabla u\cdot\nabla v+uv\big)dx,\quad \|u\|=\langle u,u\rangle^{\frac{1}{2}}.
			$$
Let $W:=H^1(\mathbb{R}^{3})\times H^1(\mathbb{R}^{3})$ be the product Sobolev space endowed with the norm $|(u,v)|=\left(|u|^2+|v|^2\right)^{\frac{1}{2}}$ for any $(u,v)\in W$.
Moreover, we denote by $W^{-1}$ the dual space of $W$. We now recall the following Gagliardo-Nirenberg inequality.

	\begin{lemma} \cite{Weinstein}(Gagliardo-Nirenberg inequality) Let $N\ge2$ and $2 \leq p<2^*$. Then there exists a constant $C_{N,p}>0$ depending on $N$ and $p$ such that
            \begin{equation*}\label{gn}
			\|u\|_p\leq C_{N,p}\left\|\nabla u\right\|_2^{\gamma_p} \|u\|_2^{1-\gamma_p},~~\forall u\in H^1(\mathbb{R}^N),
            \end{equation*}
            where $\gamma_{p}=N\left(\frac{1}{2}-\frac{1}{p}\right).$	
	\end{lemma}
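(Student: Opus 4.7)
The plan is to reduce to a density argument and then combine the Sobolev embedding with a Hölder interpolation chosen so that the resulting estimate has the correct scaling. By density of $C_c^\infty(\mathbb{R}^N)$ in $H^1(\mathbb{R}^N)$, it suffices to prove the inequality for smooth compactly supported $u$. First I would dispose of the endpoint case $p=2$, where $\gamma_p=0$ and the estimate reduces to $\|u\|_2 \le C_{N,2}\|u\|_2$, trivially valid with $C_{N,2}=1$. So from here on I would assume $2<p<2^*$.

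For $N\ge 3$, the key input is the Sobolev embedding $H^1(\mathbb{R}^N)\hookrightarrow L^{2^*}(\mathbb{R}^N)$, which provides a constant $S_N>0$ with $\|u\|_{2^*}\le S_N\|\nabla u\|_2$. Since $2<p<2^*$, I would choose the unique $\theta\in(0,1)$ satisfying
\[
\frac{1}{p}=\frac{1-\theta}{2}+\frac{\theta}{2^*},
\]
and verify by direct computation that $\theta=N(\tfrac12-\tfrac1p)=\gamma_p$. The standard Hölder interpolation inequality in Lebesgue spaces then yields $\|u\|_p\le \|u\|_2^{1-\gamma_p}\|u\|_{2^*}^{\gamma_p}$, and plugging in the Sobolev estimate gives the claim with $C_{N,p}=S_N^{\gamma_p}$.

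For $N=2$, the obstacle is that $2^*=\infty$ and $H^1(\mathbb{R}^2)\not\hookrightarrow L^\infty$, so direct interpolation up to the Sobolev endpoint is not available; this is the main difficulty. I would instead invoke the $L^1$-Gagliardo–Nirenberg–Sobolev inequality $\|v\|_{N/(N-1)}\le C\|\nabla v\|_1$ applied to $v=|u|^s$, which in dimension $2$ reads $\|\,|u|^s\|_2\le C s\int|u|^{s-1}|\nabla u|\,dx$. Applying Cauchy–Schwarz to the right-hand side produces $\|u\|_{2s}^{s}\le Cs\,\|u\|_{2(s-1)}^{s-1}\|\nabla u\|_2$, a family of scale-invariant bounds parametrized by $s\ge1$. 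I would then pick $s$ so that the resulting exponent equals $p$, and use Hölder interpolation (and, if needed, a short iteration/bootstrapping in $s$) to control $\|u\|_{2(s-1)}$ by $\|u\|_2$ and $\|\nabla u\|_2$, producing the stated inequality with the correct exponents $\gamma_p$ and $1-\gamma_p$.

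As a sanity check, I would confirm scale invariance: setting $u_\lambda(x)=u(\lambda x)$ gives $\|u_\lambda\|_p=\lambda^{-N/p}\|u\|_p$, $\|\nabla u_\lambda\|_2=\lambda^{(2-N)/2}\|\nabla u\|_2$, and $\|u_\lambda\|_2=\lambda^{-N/2}\|u\|_2$, so the stated inequality is preserved under dilations exactly when $-N/p=\gamma_p\cdot(2-N)/2-(1-\gamma_p)\cdot N/2$, which rearranges to $\gamma_p=N(\tfrac12-\tfrac1p)$. This confirms that the exponent of interpolation determined above is the only one consistent with homogeneity, and so the argument above is forced upon us by scaling.
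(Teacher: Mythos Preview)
The paper does not actually prove this lemma; it is stated with a citation to \cite{Weinstein} and used as a standard tool throughout. So there is no ``paper's own proof'' to compare against.

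Your argument is correct in substance. For $N\ge 3$ the Sobolev/H\"older interpolation you give is clean and complete; the computation $\theta=\gamma_p$ and the scaling check are both right. For $N=2$ your sketch is the standard route and can be made rigorous, but as written it is a bit loose: if you set $2s=p$ with $2<p<4$ then $2(s-1)<2$, so the term $\|u\|_{2(s-1)}$ is not naturally controlled by $H^1$ data alone, and ``a short iteration/bootstrapping'' is doing real work there. The clean fix is to first take $s=2$, which gives $\|u\|_4^2\le C\|u\|_2\|\nabla u\|_2$ (exactly $\gamma_4=1/2$), then cover $2<p<4$ by interpolating between $L^2$ and $L^4$, and cover $p>4$ by iterating $s=2,3,\dots$ and interpolating between consecutive even integers; in each case a one-line check shows the interpolation exponent collapses to $\gamma_p=1-2/p$. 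With that clarification the $N=2$ case is complete.
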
	

Next, we introduce the rearrangement results given by \cite{Shibata-2017} (see also \cite{h4}). Denote by $B_r(0)$ a ball in $\mathbb{R}^N$ with radius $r>0$ and centered at the origin. The notation $|\Omega|$ represents the $N$-dimensional Lebesgue measure of any subset $\Omega$ of $\mathbb{R}^N$. Let $u: \mathbb{R}^N\to \mathbb{R}$ be a Borel measurable function. We define $u$ as vanishing at infinity if $\left|\left\{x\in\mathbb{R}^{N}:|u(x)|>t\right\}\right|<\infty$ for any $t>0$. Now, consider two Borel measurable functions, $u$ and $v$, which both vanish at infinity in $\mathbb{R}^{N}$. For any $t>0$, we define $A^*(u,v;t):=\left\{x\in \mathbb{R}^{N}:|x|<r\right\}$, where $r>0$ is chosen such that
$$\left|B_r(0)\right|=\left|\left\{x\in \mathbb{R}^{N}:|u(x)|>t\right\}\right|+\left|\left\{x\in \mathbb{R}^{N}:|v(x)|>t\right\}\right|,$$and $\{u,v\}^*(x)$ by$$\{u,v\}^*(x):=\int_0^\infty\chi_{ A^*(u,v;t)}(x)dt,$$where $\chi_A(x)$ is a characteristic function of the set $A\subset\mathbb{R}^{N}$.
\begin{lemma}\label{2.0.10}
\cite[Appendix A1]{h4} (i)\ The function $\{u,v\}^*$ is radially symmetric, non-increasing and lower semi-continuous. Furthermore, for each $t>0$, it holds $\left\{x\in \mathbb{R}^{N}:|u(x)|>t\right\}=A^*(u,v;t).$
\\(ii)\ Let $\Phi:[0,\infty)\rightarrow [0,\infty)$ be nondecreasing, lower semi-continuous, continuous at 0 and $\Phi(0)=0$. Then, $\left\{\Phi(u),\Phi(v)\right\}^*\\=\Phi\left(\{u,v\}^*\right).$
\\(iii)\ $\|\{u,v\}^*\|_p^p=\|u\|_p^p+\|v\|_p^p$ for $1\leq p<\infty$.
\\(iv)\ If $u,v\in H^1(\mathbb{R}^{N})$, then $\{u,v\}^*\in H^1(\mathbb{R}^{N})$, $\left\|\nabla\{u,v\}^*\right\|_2^2\leq \|\nabla u\|_2^2+\|\nabla v\|_2^2$. In addition, if $u,v \in \left(H^1(\mathbb{R}^{N})\cap C^1(\mathbb{R}^{N})\right)\setminus \{0\}$ are radially symmetric, positive and non-increasing, then
$$\int_{\mathbb{R}^{N}}\left|\nabla\{u,v\}^*\right|^2dx<\int_{\mathbb{R}^{N}}|\nabla u|^2dx+\int_{\mathbb{R}^{N}}|\nabla v|^2dx.$$
\\(v)\ Let $u_1,u_2,v_1,v_2\geq 0$ be $Borel$ measurable functions which vanish at infinity. Then $$\int_{\mathbb{R}^{N}}(u_1u_2+v_1v_2)dx\leq\int_{\mathbb{R}^{N}}\{u_1,v_1\}^*\{u_2,v_2\}^*dx.$$
\end{lemma}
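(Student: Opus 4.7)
The plan is to derive all five properties from the single structural observation that, by construction, the distribution function of $\{u,v\}^*$ satisfies $\mu_{\{u,v\}^*}(t) = \mu_{|u|}(t) + \mu_{|v|}(t)$ for every $t>0$, where $\mu_f(t):=|\{|f|>t\}|$, together with the fact that each super-level set $A^*(u,v;t)$ is a ball centered at the origin whose radius $r(t)$ is non-increasing and right-continuous in $t$ (since $\mu_{|u|}+\mu_{|v|}$ is). This is the development carried out in Appendix A1 of \cite{h4}; I will sketch each step in order.

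Items (i)--(iii) are essentially bookkeeping. For (i), the representation $\{u,v\}^*(x)=\int_0^\infty\chi_{B_{r(t)}(0)}(x)\,dt$ with $r$ non-increasing and right-continuous makes $\{u,v\}^*$ radial, non-increasing in $|x|$, and lower semi-continuous, and a level-set computation gives the identity $\{\{u,v\}^*>t\}=A^*(u,v;t)$. For (ii), using $\mu_{\Phi(f)}(t)=\mu_f(\Phi^{-1}(t+))$ for non-negative $f$ and nondecreasing $\Phi$ with $\Phi(0)=0$ (with the right-continuous inverse $\Phi^{-1}(t+):=\inf\{s:\Phi(s)>t\}$), both $\Phi(\{u,v\}^*)$ and $\{\Phi(u),\Phi(v)\}^*$ have the same distribution function; as both are radial non-increasing and lower semi-continuous they coincide. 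Item (iii) is the layer-cake formula $\|f\|_p^p=p\int_0^\infty t^{p-1}\mu_f(t)\,dt$ applied to $f=\{u,v\}^*$, combined with the additivity of the distribution function.

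Item (iv) is the main technical obstacle and demands genuine work. For the non-strict inequality, one approximates $u,v\in H^1(\mathbb{R}^N)$ by functions in $C_c^\infty$ and applies the coarea formula level-by-level, combining the Euclidean isoperimetric inequality with the subadditivity $(a+b)^{(N-1)/N}\le a^{(N-1)/N}+b^{(N-1)/N}$ to bound the perimeter of $A^*(u,v;t)$ by the sum of the perimeters of $\{|u|>t\}$ and $\{|v|>t\}$. Equivalently, in the radial decreasing $C^1$ setting one works with the super-level radii $r_u(s), r_v(s)$ and the radius $R(s)$ of $\{\{u,v\}^*>s\}$ satisfying $R(s)^N=r_u(s)^N+r_v(s)^N$; a direct calculation gives
\begin{equation*}
\|\nabla u\|_2^2=\omega_{N-1}\int_0^\infty \frac{r_u(s)^{N-1}}{|r_u'(s)|}\,ds
\end{equation*}
and analogously for $v$ and $\{u,v\}^*$. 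Applying Cauchy--Schwarz to the pairs $(\sqrt{r_u^{N-1}|r_u'|},\sqrt{r_v^{N-1}|r_v'|})$ and $(\sqrt{r_u^{N-1}/|r_u'|},\sqrt{r_v^{N-1}/|r_v'|})$, combined with $r_u^{N-1}+r_v^{N-1}\ge(r_u^N+r_v^N)^{(N-1)/N}=R^{N-1}$, yields the pointwise inequality $R^{N-1}/|R'|\le r_u^{N-1}/|r_u'|+r_v^{N-1}/|r_v'|$, which integrates to the claim. For the strict version under the hypothesis that $u,v\ne 0$ are radial, positive, $C^1$ and non-increasing, the subadditivity $r_u^{N-1}+r_v^{N-1}>R^{N-1}$ is strict on every $s$ for which both $r_u(s),r_v(s)>0$, which is a set of positive measure, so integration produces a strict inequality. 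The non-radial general case reduces to the radial one by first symmetrizing $u$ and $v$ individually (which cannot increase gradient $L^2$ norms).

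Item (v) follows from the double layer-cake identity
\begin{equation*}
\int_{\mathbb{R}^N}(u_1u_2+v_1v_2)\,dx=\int_0^\infty\!\!\int_0^\infty\bigl[|\{u_1>s\}\cap\{u_2>t\}|+|\{v_1>s\}\cap\{v_2>t\}|\bigr]\,ds\,dt
\end{equation*}
and the elementary bound $|A\cap B|+|C\cap D|\le\min(|A|+|C|,|B|+|D|)$ applied with $A=\{u_1>s\}$, $B=\{u_2>t\}$, $C=\{v_1>s\}$, $D=\{v_2>t\}$. Since the sets $A^*(u_i,v_i;\cdot)$ are concentric balls, one checks directly that $|A^*(u_1,v_1;s)\cap A^*(u_2,v_2;t)|=\min(\mu_{u_1}(s)+\mu_{v_1}(s),\mu_{u_2}(t)+\mu_{v_2}(t))$, which coincides with this upper bound; integrating yields (v). The hardest step of the whole lemma is the strict part of (iv); the cleanest path is the one-variable computation with the super-level radii above, where strictness reduces transparently to strict subadditivity of $x\mapsto x^{(N-1)/N}$ on $(0,\infty)\times(0,\infty)$.
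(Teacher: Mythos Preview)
The paper does not prove this lemma at all: it is quoted verbatim from Ikoma \cite[Appendix A1]{h4}, with no argument supplied. Your sketch is a faithful reconstruction of Ikoma's development---the additivity of distribution functions for (i)--(iii), the coarea/radial-profile computation with Cauchy--Schwarz and subadditivity of $x\mapsto x^{(N-1)/N}$ for (iv), and the double layer-cake with the concentric-ball identity for (v)---and is correct at the level of detail you give. Since the paper itself offers nothing beyond the citation, there is no independent approach to compare; your write-up simply fills in what the authors chose to import wholesale.
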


We also need the following Liouville type result.
\begin{lemma}
\cite[Appendix A2]{h4}\label{Liouville} Assume that $p\in (1,\frac{N}{N-2}\big]$ if $N\geq 3$ and $p\in (1, +\infty)$ if $N=1,2$. If $u\in L^p(\mathbb{R}^{N})$ is smooth nonnegative function satisfying $-\Delta u\geq 0$ in $\mathbb{R}^N$, then $u\equiv0$.
\end{lemma}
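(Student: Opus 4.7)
My plan is to combine a Riesz-type representation for nonnegative superharmonic functions with the explicit decay of the Newtonian kernel at infinity. I would treat the main case $N\ge 3$ in detail and then indicate the adjustments needed for $N=1,2$.

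First, I fix $x\in\mathbb{R}^N$ and apply Green's representation formula to $u$ on each ball $B_R(x)$:
$$u(x)=\int_{\partial B_R(x)}u(y)\,P_R(x,y)\,d\sigma(y)+\int_{B_R(x)}G_R(x,y)(-\Delta u)(y)\,dy,$$
where $P_R\ge 0$ is the Poisson kernel and $G_R\ge 0$ is the Dirichlet Green's function of $B_R(x)$. Both terms are nonnegative since $u\ge 0$ and $-\Delta u\ge 0$. Since $G_R(x,y)\nearrow \Phi(x-y)=c_N|x-y|^{2-N}$, the fundamental solution, as $R\to\infty$, monotone convergence yields that
$$U(x):=c_N\int_{\mathbb{R}^N}|x-y|^{2-N}(-\Delta u)(y)\,dy\le u(x),$$
and the remainder $h(x):=u(x)-U(x)\ge 0$ arises as the limit of nonnegative Poisson integrals. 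A standard argument (Harnack/Weyl) shows that $h$ is harmonic on all of $\mathbb{R}^N$.

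Second, I invoke the classical Liouville theorem: every nonnegative harmonic function on $\mathbb{R}^N$ is constant. Since $0\le h\le u\in L^p(\mathbb{R}^N)$, one has $h\in L^p$, and a nonzero constant is not in $L^p$; hence $h\equiv 0$, so $u=U$ is exactly the Newtonian potential of $f:=-\Delta u\ge 0$.

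Third, I argue by contradiction: assume $f\not\equiv 0$ and pick $R_0>0$ with $\alpha:=\int_{B_{R_0}(0)}f\,dy>0$. For $|x|\ge 2R_0$ and $y\in B_{R_0}(0)$ one has $|x-y|\le \tfrac{3}{2}|x|$, so
$$u(x)\ge c_N\int_{B_{R_0}(0)}|x-y|^{2-N}f(y)\,dy\ge c_N\bigl(\tfrac{3}{2}\bigr)^{2-N}\alpha\,|x|^{2-N}.$$
Passing to polar coordinates, $\int_{|x|>2R_0}u^p\,dx\ge C\int_{2R_0}^{\infty}r^{(2-N)p+N-1}\,dr$, which diverges precisely when $(N-2)p\le N$, i.e.\ $p\le N/(N-2)$. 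Under the hypothesis this contradicts $u\in L^p(\mathbb{R}^N)$, forcing $f\equiv 0$. Thus $u$ is a nonnegative harmonic function in $L^p$, and the Liouville step from before gives $u\equiv 0$.

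For $N=1,2$ the fundamental solution no longer decays ($|x|$ for $N=1$, $-\tfrac{1}{2\pi}\log|x|$ for $N=2$), so the third step needs to be rephrased: concretely, a classical result states that every nonnegative superharmonic function on $\mathbb{R}^N$ with $N\le 2$ is constant (for $N=1$ it is concave and bounded below; for $N=2$ this follows from the logarithmic growth of the fundamental solution together with the Liouville theorem for harmonic functions). Combined with $u\in L^p$, this constant must vanish, so $u\equiv 0$. The main technical obstacle I anticipate is justifying the decomposition $u=h+U$ rigorously, in particular identifying the limit of the Poisson-integral terms with a globally harmonic function; once that bookkeeping is in place, the rest is an explicit computation with the kernel $|x|^{2-N}$.
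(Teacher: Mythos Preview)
The paper does not supply its own proof of this lemma; it is simply quoted from Ikoma \cite[Appendix A2]{h4}. So there is nothing in the present paper to compare against, and the relevant question is only whether your argument is sound.

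Your proof is correct and follows a classical line. For $N\ge 3$ the key steps---the Riesz decomposition $u=h+U$ with $U$ the Newtonian potential of $f=-\Delta u$ and $h$ harmonic, the Liouville theorem forcing $h\equiv 0$, and the explicit lower bound $u(x)\gtrsim |x|^{2-N}$ whenever $f\not\equiv 0$---are all standard and correctly assembled; the divergence of $\int_{|x|>R} |x|^{(2-N)p}\,dx$ for $p\le N/(N-2)$ is exactly the sharp threshold. The technical point you flag (why $h$ is globally harmonic) is easily closed: once $U\le u<\infty$ is established by monotone convergence, $-\Delta U=f$ holds distributionally, so $\Delta h=0$ in $\mathcal{D}'$ and Weyl's lemma applies. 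For $N\le 2$ your reduction is also correct: $\mathbb{R}$ and $\mathbb{R}^2$ are parabolic, so every nonnegative superharmonic function is constant (concavity for $N=1$; for $N=2$, the spherical mean argument shows $r\bar u'(r)\equiv 0$, hence $\Delta u=0$, hence $u$ is constant by Liouville), and a nonzero constant cannot lie in $L^p$.
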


Using Lemma \ref{Liouville}, we can determine the sign of Lagrange multipliers.
\begin{lemma}\label{2.0.13}
If $(u,v)$ weakly solves system \eqref{1.1} and $u,v\ge0$, $u\not\equiv0$, then $\lambda_1>0$, $\lambda_2>0.$
\end{lemma}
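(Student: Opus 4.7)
The plan is to apply the Liouville-type result (Lemma \ref{Liouville}) separately to each component, exploiting the fact that all nonlinearities in \eqref{1.1} are non-negative once we assume $u,v\ge 0$ and recall that $\beta>0$. Before starting, I would note that standard elliptic regularity applied to \eqref{1.1} makes $u,v$ smooth, and the Sobolev embedding $H^1(\mathbb{R}^3)\hookrightarrow L^p(\mathbb{R}^3)$ for every $p\in[2,6]$ places both components in some $L^p$ with $p\in(1,N/(N-2)]=(1,3]$, which is precisely the integrability required by Lemma \ref{Liouville} when $N=3$.

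First I would dispose of the case $v\equiv 0$. Plugging $v\equiv 0$ into the second equation of \eqref{1.1} yields $0=\tfrac12\beta u^2$, and since $\beta>0$ this forces $u\equiv 0$, contradicting the hypothesis $u\not\equiv 0$. Hence $v\not\equiv 0$ as well, and we may apply Lemma \ref{Liouville} to either component to derive a contradiction from non-triviality.

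Now suppose toward contradiction that $\lambda_1\le 0$. Using $u,v\ge 0$ and $\beta>0$, the first equation rewrites as
$$-\Delta u \;=\; -\lambda_1 u + u^3 + \beta uv \;\ge\; 0 \quad \text{in } \mathbb{R}^3.$$
Since $u\ge 0$ is smooth, lies in $L^p(\mathbb{R}^3)$ for some $p\in(1,3]$, and is superharmonic, Lemma \ref{Liouville} forces $u\equiv 0$, a contradiction. Therefore $\lambda_1>0$. An entirely analogous argument handles $\lambda_2$: if $\lambda_2\le 0$ then
$$-\Delta v \;=\; -\lambda_2 v + \tfrac12 v^2 + \tfrac12\beta u^2 \;\ge\; 0,$$
and Lemma \ref{Liouville} gives $v\equiv 0$, contradicting the conclusion of the previous step.

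I do not anticipate any genuine obstacle here: the whole argument is a direct sign-chasing application of Lemma \ref{Liouville}, and the only subtlety worth a sentence is the preliminary remark that $v\not\equiv 0$ must be deduced from $u\not\equiv 0$ before invoking Liouville on the second equation.
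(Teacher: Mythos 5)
Your proof is correct and follows essentially the same route as the paper: elliptic regularity, the observation that $v\not\equiv 0$, and a sign-chasing application of the Liouville-type Lemma \ref{Liouville} to each equation. If anything, you are slightly more careful than the paper, which writes ``if $\lambda_1<0$'' where the argument (as you state it) is needed for the full range $\lambda_1\le 0$, and you make explicit the $L^p$ integrability check and the deduction of $v\not\equiv 0$ that the paper leaves as ``easily seen.''
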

\begin{proof}
By the regularity theory, $u,v\in C^2(\mathbb{R}^3)$. Since $u\not\equiv0$, it is easily seen that $v\not\equiv 0$.
If $\lambda_1<0$, we can see that $$-\Delta u=u^3+\beta uv-\lambda_1u\geq 0.$$  Then, in view of $u\in H^1(\mathbb{R}^3)$, we conclude by Lemma \ref{Liouville} that $u\equiv0$, which is a contradiction. So $\lambda_1>0$. Similarly, we have $\lambda_2>0.$
\end{proof}

\section{Proof of Theorem \ref{thm: main ex}}\label{ground}\setcounter{equation}{0}

In this section, we will initially utilize a constraint minimization approach to demonstrate the existence of local minimum normalized solutions to equation \eqref{1.1}. Then, we will proceed to minimize over the solution set for normalized solutions to establish the existence of normalized ground state solutions.

\subsection{Local minimizer}\label{local}\setcounter{equation}{0}
For $k>0$, define $$
A_{1,k}:=\left\{(u,v)\in S(a,b): |\nabla u|_2^2+|\nabla v|_2^2\leq k\right\}, $$
$$A_{2,k}:=\left\{(u,v)\in S(a,b): 2k\leq|\nabla u|_2^2+|\nabla v|_2^2\leq 6k\right\}.
$$
\begin{lemma}\label{Local}
For any $a\ge0$, there exists $\bar{b}>0$ such that for any $0<b<\bar{b}$,
 $$\sup_{(u,v)\in A_{1,b}}J(u,v)<\inf_{(u,v)\in A_{2,b}}J(u,v).$$
\end{lemma}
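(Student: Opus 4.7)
The plan is a direct calculation using the Gagliardo-Nirenberg inequality to compare the two values. The basic heuristic is that for $(u,v)\in A_{1,b}$ the kinetic term contributes at most $b/2$, while for $(u,v)\in A_{2,b}$ it contributes at least $b$; the nonlinear terms, controlled by powers of the gradient norm and the masses, will turn out to be of lower order in $b$ as $b\to 0^+$.

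First I would record the three relevant Gagliardo-Nirenberg estimates in $\mathbb{R}^3$: for $(u,v)\in S(a,b)$ with $A:=\|\nabla u\|_2^2$ and $B:=\|\nabla v\|_2^2$,
\begin{equation*}
\|u\|_4^4 \le C\,A^{3/2}\,a^{1/2},\qquad \|v\|_3^3 \le C\,B^{3/4}\,b^{3/4},
\end{equation*}
and, via Hölder followed by Gagliardo-Nirenberg,
\begin{equation*}
\left|\int_{\mathbb{R}^3} u^2 v\,dx\right|\le \|u\|_4^2\,\|v\|_2 \le C\,A^{3/4}\,a^{1/4}\,b^{1/2}.
\end{equation*}
These hold with constants depending only on the dimension. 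I note that although $v$ is allowed to be sign-changing, we only need the crude bound $|\int v^3|\le\|v\|_3^3$ and likewise for the cross term, which is all that is required here.

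Next I would obtain the upper bound on $A_{1,b}$. Since $A+B\le b$ on this set, and dropping the non-positive term $-\tfrac14\|u\|_4^4$,
\begin{equation*}
J(u,v)\le \tfrac12(A+B)+\tfrac16\|v\|_3^3+\tfrac{\beta}{2}\left|\int u^2v\right|
\le \tfrac{b}{2}+C_1 b^{3/2}+C_2\,a^{1/4}\,b^{5/4},
\end{equation*}
using $A^{3/4},B^{3/4}\le b^{3/4}$. Symmetrically, on $A_{2,b}$ I have $A+B\ge 2b$ and $A,B\le 6b$, hence
\begin{equation*}
J(u,v)\ge \tfrac12(A+B)-\tfrac14\|u\|_4^4-\tfrac16\|v\|_3^3-\tfrac{\beta}{2}\left|\int u^2v\right|
\ge b - C_3\,a^{1/2}\,b^{3/2}-C_4 b^{3/2}-C_5\,a^{1/4}\,b^{5/4}.
\end{equation*}

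Combining the two estimates gives
\begin{equation*}
\inf_{A_{2,b}}J-\sup_{A_{1,b}}J \ge \tfrac{b}{2} - C(a)\,b^{5/4},
\end{equation*}
where $C(a)$ collects all lower-order constants and depends only on $a$ and $\beta$. Since $b^{5/4}=b\cdot b^{1/4}$, this difference is strictly positive once $b^{1/4}<1/(2C(a))$, so choosing $\bar b:=(2C(a))^{-4}$ works. The main obstacle is essentially just bookkeeping of the Gagliardo-Nirenberg exponents, which must all be strictly above $b^{1}$ for the argument to close; the critical exponent turns out to be $b^{5/4}$ coming from the coupling term $\int u^2 v$, and it is precisely because this is super-linear in $b$ that we get a small-$b$ window where the separation holds.
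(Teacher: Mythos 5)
Your proposal is correct and follows essentially the same route as the paper: both use H\"older plus Gagliardo--Nirenberg to show the kinetic gap of order $\tfrac{b}{2}$ between $A_{1,b}$ and $A_{2,b}$ dominates the nonlinear terms, which scale like $b^{3/2}$ and $b^{5/4}$; the paper merely bounds the difference $J(u_2,v_2)-J(u_1,v_1)$ directly instead of estimating $\sup$ and $\inf$ separately, and your explicit use of $\lvert\int v^3\rvert\le\|v\|_3^3$ for possibly sign-changing $v$ is, if anything, slightly more careful than the paper's display. The only cosmetic fix needed is to take $\bar b\le 1$ (or keep both $b^{3/2}$ and $b^{5/4}$ terms) when absorbing everything into the single constant $C(a)$.
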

	\begin{proof}
For any $a\ge 0$ and $b>0$, let $(u_1,v_1)\in A_{1,k}$, $(u_2,v_2)\in A_{2,k}$ with $k>0$. By the H\"older inequality and Lemma \ref{gn} it follows that
\begin{equation}\begin{aligned}
		&J(u_2,v_2)-J(u_1,v_1) \nonumber\\
=&\frac{1}{2}\int_{\mathbb{R}^{3}}(|\nabla u_2|^2+|\nabla v_2|^2)dx-\frac{1}{4}\int_{\mathbb{R}^{3}}u_2^4dx-\frac{1}{6}\int_{\mathbb{R}^{3}}v_2^3dx-\frac{1}{2}\beta\int_{\mathbb{R}^{3}}u_2^2v_2dx\nonumber\\
		-&\frac{1}{2}\int_{\mathbb{R}^{3}}(|\nabla u_1|^2+|\nabla v_1|^2)dx+\frac{1}{4}\int_{\mathbb{R}^{3}}u_1^4dx+\frac{1}{6}\int_{\mathbb{R}^{3}}v_1^3dx+\frac{1}{2}\beta\int_{\mathbb{R}^{3}}u_1^2v_1dx\nonumber\\
          \geq&\frac{1}{2}k-\frac{1}{4}C_{3,4}^4a^{\frac{1}{2}}(6k)^{\frac{3}{2}}-\frac{1}{6}C_{3,3}^3b^{\frac{3}{4}}(6k)^{\frac{3}{4}}-\frac{1}{2}\beta C_{3,4}^2a^{\frac{1}{4}}b^{\frac{1}{2}}(6k)^{\frac{3}{4}}:=g(b,k).
		\end{aligned}\end{equation}
Consequently, we obtain
$$g(b,b):=\frac{1}{2}b-\left(\frac{1}{4}6^{\frac{3}{2}} C_{3,4}^4a^{\frac{1}{2}} +6^{-\frac{1}{4}}C_{3,3}^3\right)b^{\frac{3}{2}}-\frac{1}{2}6^{\frac{3}{4}}\beta C_{3,4}^2a^{\frac{1}{4}}b^{\frac{5}{4}}.$$
Upon direct computation, it is clear that for any $a\geq0$, there exists $\bar{b}=\bar{b}(a)>0$ such that for any $0<b<\bar{b}$, the inequality
 $J(u_2,v_2)>J(u_1,v_1)$ holds.
\end{proof}

Define $B_{k}:=\left\{(u,v)\in W: |\nabla u|_2^2+|\nabla v|_2^2\leq k\right\}$ for $k>0$. We then consider
the following minimization problem
$$m(a,b):=\inf\limits_{(u,v)\in S(a,b)\cap B_{6b}}J(u,v),\quad\forall a\ge0, b>0.
$$
The main result of this subsection is the following compactness lemma:
\begin{lemma}\label{3.2.1}
 Let $a\ge0, 0<b<\bar{b}$. Suppose $\{(u_n,v_n)\}\subset B_{6b}$ is a sequence such that $$J(u_n,v_n)\rightarrow m(a,b),\quad |u_n|_2^2\rightarrow a,\quad |v_n|_2^2\rightarrow b.$$
 Then there exists a sequence $\{y_n\}\subset \mathbb{R}^{3}$ and a pair $(u,v)\in S(a,b)$ such that $\left(u_n(x+y_n),v_n(x+y_n)\right)\rightarrow (u,v)$ strongly in $W$.
	\end{lemma}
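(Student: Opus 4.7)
The plan is a four-step concentration-compactness scheme: (i) bound the sequence in $W$; (ii) prove $m(a,b)<0$ in order to exclude vanishing; (iii) extract a nontrivial weak limit up to translation; and (iv) show that the weak limit retains both prescribed masses, after which strong convergence in $W$ follows automatically. Step (i) is immediate: $(u_n,v_n)\in B_{6b}$ bounds the gradients and the hypotheses $\|u_n\|_2^2\to a$, $\|v_n\|_2^2\to b$ bound the $L^2$ norms. For (ii), I would fix any positive $(\phi,\psi)\in S(a,b)$ and use the mass-preserving scaling $(\phi_t,\psi_t):=(t^{3/2}\phi(t\cdot),t^{3/2}\psi(t\cdot))$; a direct computation produces contributions of order $t^2$ from the gradient part, $t^3$ from $\int\phi^4$, and $t^{3/2}$ from $\int\psi^3$ and $\int\phi^2\psi$. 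For $t$ small enough, $(\phi_t,\psi_t)\in S(a,b)\cap B_{6b}$ and the $t^{3/2}$ terms dominate with a negative sign, so $m(a,b)<0$.

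For (iii), if vanishing held then Lions' lemma would give $\|u_n\|_p,\|v_n\|_p\to 0$ for every $p\in(2,6)$; combined with the bound $\bigl|\int u_n^2 v_n\,dx\bigr|\leq\|u_n\|_4^2\|v_n\|_2$, this forces the three nonlinear terms of $J$ to vanish, contradicting $J(u_n,v_n)\to m(a,b)<0$. Hence there exist $y_n\in\mathbb{R}^3$ and $\delta>0$ with $\int_{B_1(y_n)}(u_n^2+v_n^2)\,dx\geq\delta$. Setting $\tilde u_n:=u_n(\cdot+y_n)$, $\tilde v_n:=v_n(\cdot+y_n)$ and passing to a subsequence, I obtain $(\tilde u_n,\tilde v_n)\rightharpoonup(u,v)$ weakly in $W$, strongly in $L^p_{\mathrm{loc}}$ for $p\in[1,6)$, and almost everywhere; Rellich on $B_1(0)$ then yields $\int_{B_1(0)}(u^2+v^2)\,dx\geq\delta$, so $(u,v)\not\equiv(0,0)$.

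The main obstacle is step (iv): showing $(\|u\|_2^2,\|v\|_2^2)=(a,b)$. I would argue by contradiction, assuming $(\|u\|_2^2,\|v\|_2^2)=(\alpha,\beta)$ with $\alpha\leq a$, $\beta\leq b$ and $(\alpha,\beta)\neq(a,b)$. Brezis--Lieb applied to the $L^2$, $L^3$, $L^4$ norms and an analogous splitting for the coupling term (via a.e.\ convergence and the weak $H^1$ vanishing of the remainders) yield
\begin{align*}
J(\tilde u_n,\tilde v_n)=J(u,v)+J(p_n,q_n)+o(1),
\end{align*}
where $p_n:=\tilde u_n-u$, $q_n:=\tilde v_n-v$ satisfy $p_n,q_n\rightharpoonup 0$, $\|p_n\|_2^2\to a-\alpha$, $\|q_n\|_2^2\to b-\beta$. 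Weak lower semicontinuity of the gradient gives $(u,v)\in B_{6b}$. To contradict the resulting identity $J(u,v)+\liminf J(p_n,q_n)=m(a,b)$, I would combine (a) a rescaling of $(u,v)$ back to $S(a,b)\cap B_{6b}$ to lower-bound $J(u,v)$, (b) an analysis of $(p_n,q_n)$ via Lions' lemma, so that either $(p_n,q_n)$ vanishes (giving $\liminf J(p_n,q_n)\geq 0$ as in step (iii)) or concentrates along a second translation producing a second profile, and (c) the Schwarz-type rearrangement $\{\cdot,\cdot\}^*$ from Lemma \ref{2.0.10} applied to merge the two profiles into a radial competitor with strictly smaller gradient and the same $L^p$ content, producing the strict inequality that contradicts the split. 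The delicate point is that $m$ is defined only on the small ball $B_{6b}$, so the usual mass-subadditivity must be handled via the rearrangement tool in order to keep competitors inside the admissible set.

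Once $\|u\|_2^2=a$ and $\|v\|_2^2=b$, weak $H^1$ convergence combined with convergence of $L^2$ norms upgrades to strong $L^2$ convergence, and interpolation with the $H^1$ bound gives strong $L^p$ convergence for $p\in[2,6)$. The three nonlinear terms in $J$ therefore converge to their values at $(u,v)$; since $J(\tilde u_n,\tilde v_n)\to m(a,b)=J(u,v)$ (lower bound from weak lower semicontinuity and minimality, upper bound from $(u,v)\in S(a,b)\cap B_{6b}$), the sums $\|\nabla\tilde u_n\|_2^2+\|\nabla\tilde v_n\|_2^2$ must converge to $\|\nabla u\|_2^2+\|\nabla v\|_2^2$. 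Weak convergence together with norm convergence in the Hilbert space $W$ then yields the desired strong convergence $(\tilde u_n,\tilde v_n)\to(u,v)$ in $W$.
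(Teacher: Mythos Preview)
Your plan through step (iii) and the final upgrade to strong $W$ convergence match the paper's proof (its Lemmas \ref{3.1.2} and \ref{3.1.5}, plus the closing argument). For step (iv) the paper does not rescale $(u,v)$ back to $S(a,b)$ as you propose in (a); instead it first proves, as separate lemmas, the continuity of $(a,b)\mapsto m(a,b)$ and the subadditivity $m(a,b)\le m(c,d)+m(a-c,b-d)$ for $0<d<b$ (the latter via $\{\cdot,\cdot\}^*$, exactly as you anticipate). From the Brezis--Lieb splitting it then deduces $J(u,v)=m(\|u\|_2^2,\|v\|_2^2)$ and the additive identity $m(a,b)=m(a-\|u\|_2^2,b-\|v\|_2^2)+m(\|u\|_2^2,\|v\|_2^2)$, and only afterward runs the two-profile rearrangement contradiction you sketch in (b)--(c), first passing to Schwarz symmetrizations so that the profiles are $C^2$, positive, radially decreasing minimizers, which is what triggers the \emph{strict} inequality in Lemma \ref{2.0.10}(iv). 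Your rescaling idea is problematic on its own terms: multiplying $u$ by $\sqrt{a/\alpha}$ can push the pair out of $B_{6b}$, and the effect on $J$ has no definite sign.

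The genuine gap in your outline is the boundary case $\beta=\|v\|_2^2=b$ with $\alpha=\|u\|_2^2<a$. Here $q_n\to 0$ in $L^2$, so any second profile extracted from $(p_n,q_n)$ has vanishing $v$-component; but $m(c,0)$ is not defined (the constrained problem requires positive second mass), and the strict gradient drop in Lemma \ref{2.0.10}(iv) needs both merged functions to be nonzero, so the rearrangement contradiction in (c) does not fire. If instead $(p_n,q_n)$ vanishes in $L^p$ for $p\in(2,6)$, you only get $J(u,v)\le m(a,b)$ with $(u,v)\in S(\alpha,b)$, and nothing so far forces $m(\alpha,b)>m(a,b)$. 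The paper isolates this case (its Case 3) and handles it by a mechanism entirely outside the subadditivity framework: once $v_n\to v$ in $L^p$, it tests the two equations of system \eqref{1.1} against $v_n-v$ and $u_n-u$ to bootstrap $v_n\to v$ in $H^1$, then $u_n\to u$ in $L^p$ for $p\in[3,6)$, then $u_n\to u$ in $H^1$, forcing $\|u\|_2^2=a$. You will need either this device (which presumes the minimizing sequence carries Euler--Lagrange information) or an independent substitute to close this case.
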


Based on the above lemma, we can conclude that there exists a local minimizer $(u_1,v_1)$ of $J$ on $S(a,b)\cap B_{6b}$, thereby completing the proof of the first part of Theorem \ref{thm: main ex}. Furthermore, Lemma \ref{2.0.13} indicates that $\lambda_1,\lambda_2>0.$ To proceed with proving Lemma \ref{3.2.1}, let us first gather some properties of $m(a,b)$.

\begin{lemma}\label{3.1.2}
		For any $a\ge0, b>0$, we have $m(a,b)<0$.
	\end{lemma}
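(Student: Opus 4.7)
The plan is to exhibit an explicit element of $S(a,b)\cap B_{6b}$ at which $J$ is strictly negative. The natural device is the $L^2$-preserving scaling $w_t(x):=t^{3/2}w(tx)$, which rescales the gradient quadratically in $t$ but rescales the cubic/quartic terms with different powers, so that taking $t$ small lets us isolate the sign of the leading nonlinearity.

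Concretely, I would fix any $(u_0,v_0)\in S(a,b)$ with $u_0,v_0\geq 0$ and $v_0\not\equiv 0$ (possible because $b>0$; if $a=0$ just take $u_0\equiv 0$). Define $u_t(x):=t^{3/2}u_0(tx)$ and $v_t(x):=t^{3/2}v_0(tx)$ for $t>0$. A direct change of variables gives $\|u_t\|_2^2=a$, $\|v_t\|_2^2=b$, so $(u_t,v_t)\in S(a,b)$, while
\begin{equation*}
\|\nabla u_t\|_2^2+\|\nabla v_t\|_2^2 = t^2\bigl(\|\nabla u_0\|_2^2+\|\nabla v_0\|_2^2\bigr),
\end{equation*}
\begin{equation*}
\|u_t\|_4^4=t^{3}\|u_0\|_4^4,\qquad \|v_t\|_3^3=t^{3/2}\|v_0\|_3^3,\qquad \int_{\mathbb{R}^3}u_t^2 v_t\,dx = t^{3/2}\int_{\mathbb{R}^3}u_0^2 v_0\,dx.
\end{equation*}
Substituting into $J$ yields
\begin{equation*}
J(u_t,v_t)=\tfrac{t^2}{2}\bigl(\|\nabla u_0\|_2^2+\|\nabla v_0\|_2^2\bigr)-\tfrac{t^3}{4}\|u_0\|_4^4-\tfrac{t^{3/2}}{6}\|v_0\|_3^3-\tfrac{\beta t^{3/2}}{2}\int_{\mathbb{R}^3}u_0^2 v_0\,dx.
\end{equation*}

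Since the exponent $3/2$ is strictly smaller than $2$ and $3$, as $t\to 0^+$ the dominant term is $-\tfrac{t^{3/2}}{6}\|v_0\|_3^3-\tfrac{\beta t^{3/2}}{2}\int u_0^2 v_0\,dx<0$. Hence there exists $t_0>0$ small so that simultaneously $J(u_{t_0},v_{t_0})<0$ and $\|\nabla u_{t_0}\|_2^2+\|\nabla v_{t_0}\|_2^2=t_0^2(\|\nabla u_0\|_2^2+\|\nabla v_0\|_2^2)\leq 6b$, i.e. $(u_{t_0},v_{t_0})\in S(a,b)\cap B_{6b}$. Taking the infimum gives $m(a,b)\leq J(u_{t_0},v_{t_0})<0$.

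I do not expect any serious obstacle: the only minor point is that we must produce at least one competitor in $S(a,b)$ (which is automatic when $b>0$, including the boundary case $a=0$ handled by $u_0\equiv 0$), and we must verify that the constraint $|\nabla u|_2^2+|\nabla v|_2^2\leq 6b$ is satisfied after rescaling, which follows because the gradient term scales as $t^2$ and $t$ can be chosen arbitrarily small. The rest is elementary scaling arithmetic.
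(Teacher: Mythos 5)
Your proposal is correct and follows essentially the same route as the paper: the mass-preserving scaling $u_t(x)=t^{3/2}u(tx)$, $v_t(x)=t^{3/2}v(tx)$, with the $t^{3/2}$ cubic terms dominating the $t^2$ gradient term as $t\to 0^+$, so that $J(u_t,v_t)\to 0^-$ while the rescaled pair stays in $S(a,b)\cap B_{6b}$ for small $t$. Your explicit choice $u_0,v_0\ge 0$ with $v_0\not\equiv 0$ is a minor tightening the paper leaves implicit (the sign of the leading terms does require it), but the argument is otherwise identical.
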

	\begin{proof}
		For any $(u,v)\in S(a,b)\cap B_{6b}$, take $u_t(x):=t^{\frac{3}{2}}u(tx),\ v_t(x):=t^{\frac{3}{2}}v(tx).$ Clearly, $(u_t(x),v_t(x))\in S(a,b)$. Moreover,
    there exists $t_0\in(0,1)$ small enough such that $(u_t(x),v_t(x))\in S(a,b)\cap B_{6b}$ for all $t\in(0, t_0)$.
Since
 \begin{equation}\begin{aligned}
   J(u_t,v_t)=&\frac{1}{2}\int_{\mathbb{R}^{3}}(|\nabla u_t|^2+|\nabla v_t|^2)dx-\frac{1}{4}\int_{\mathbb{R}^{3}}u_t^4dx-\frac{1}{6}\int_{\mathbb{R}^{3}}v_t^3dx-\frac{1}{2}\beta\int_{\mathbb{R}^{3}}u_t^2v_tdx\nonumber\\
             =&\frac{t^2}{2}\int_{\mathbb{R}^{3}}(|\nabla u|^2+|\nabla v|^2)dx-\frac{t^{3}}{4}\int_{\mathbb{R}^{3}}u^4dx-\frac{t^{\frac{3}{2}}}{6}\int_{\mathbb{R}^{3}}v^3dx-\frac{t^{\frac{3}{2}}}{2}\beta\int_{\mathbb{R}^{3}}u^2vdx\nonumber\\
             \rightarrow&0^-~~~~\mbox{as}~~t\rightarrow0^+.\nonumber
 \end{aligned}
	\end{equation}
It follows that the conclusion holds.
	\end{proof}

\begin{lemma}\label{3.1.3}
		$m(a,b)$ is continuous with respect to $a\ge0$ and $b\in (0, \bar{b})$.
	\end{lemma}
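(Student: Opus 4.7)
The plan is to establish continuity by proving both $\limsup_{(a',b')\to(a,b)} m(a',b') \le m(a,b)$ and $\liminf_{(a',b')\to(a,b)} m(a',b') \ge m(a,b)$ by means of mass rescalings of the form $u \mapsto \sqrt{a'/a}\,u$ and $v \mapsto \sqrt{b'/b}\,v$. The key preliminary observation is that any $(u,v)\in S(a,b)\cap B_{6b}$ with $J(u,v)$ sufficiently close to $m(a,b)<0$ satisfies the \emph{strict} bound $\|\nabla u\|_2^2+\|\nabla v\|_2^2 < 2b$. Indeed, the proof of Lemma \ref{Local} yields $\inf_{A_{2,b}} J \ge \sup_{A_{1,b}} J + g(b,b) \ge m(a,b) + g(b,b)$ with $g(b,b)>0$ for $b\in(0,\bar b)$, so a near-minimizer cannot lie in $A_{2,b}$; combined with $(u,v)\in B_{6b}$ this forces the gradient norm strictly below $2b$. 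This gap is what provides the room needed to rescale without leaving the admissible set after $(a,b)$ is perturbed.

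For $a>0$, pick $\varepsilon>0$ and a near-minimizer $(u,v)$ of $m(a,b)$, and set $u_n := \sqrt{a_n/a}\,u$, $v_n := \sqrt{b_n/b}\,v$, so $(u_n,v_n)\in S(a_n,b_n)$. Since
\[
\|\nabla u_n\|_2^2 + \|\nabla v_n\|_2^2 = \tfrac{a_n}{a}\|\nabla u\|_2^2 + \tfrac{b_n}{b}\|\nabla v\|_2^2 \longrightarrow \|\nabla u\|_2^2+\|\nabla v\|_2^2 < 2b,
\]
we have $(u_n,v_n)\in B_{6b_n}$ for $n$ large, and a straightforward expansion gives $J(u_n,v_n)\to J(u,v)$; hence $\limsup m(a_n,b_n) \le m(a,b)+\varepsilon$. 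The $\liminf$ direction is symmetric: starting from near-minimizers $(u_n,v_n)\in S(a_n,b_n)\cap B_{6b_n}$ (which again satisfy $\|\nabla u_n\|_2^2+\|\nabla v_n\|_2^2<2b_n$ and so are bounded in $W$), I rescale by $\tilde u_n := \sqrt{a/a_n}\,u_n$ and $\tilde v_n := \sqrt{b/b_n}\,v_n$ to land in $S(a,b)\cap B_{6b}$, with $J(\tilde u_n,\tilde v_n)-J(u_n,v_n)\to 0$.

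The main obstacle is the boundary case $a=0$, where $\sqrt{a'/a}$ is singular. For the $\limsup$ direction, starting from a near-minimizer $(0,v)$ of $m(0,b)$ I instead take $u_n := \sqrt{a_n}\,\phi$ with a fixed $\phi\in H^1(\mathbb{R}^3)$, $\|\phi\|_2=1$, and $v_n := \sqrt{b_n/b}\,v$; the added gradient energy $\tfrac{a_n}{2}\|\nabla\phi\|_2^2\to 0$ keeps $(u_n,v_n)$ in $B_{6b_n}$, and every cross or quartic term involving $u_n$ vanishes, so $J(u_n,v_n)\to J(0,v)$. For the $\liminf$ direction, given near-minimizers $(u_n,v_n)\in S(a_n,b_n)\cap B_{6b_n}$ with $a_n\to 0$, boundedness in $W$ together with $\|u_n\|_2^2 = a_n \to 0$ forces $u_n\to 0$ in $L^p(\mathbb{R}^3)$ for $2\le p<6$ by interpolation; therefore $\|u_n\|_4^4\to 0$ and $\bigl|\!\int u_n^2 v_n\bigr|\le \|u_n\|_4^2\|v_n\|_2\to 0$, yielding $J(u_n,v_n)\ge J(0,v_n)+o(1)$. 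Rescaling only the second coordinate, $\tilde v_n := \sqrt{b/b_n}\,v_n$, then places $(0,\tilde v_n)$ inside $S(0,b)\cap B_{6b}$ and gives $m(0,b)\le J(0,\tilde v_n) = J(0,v_n)+o(1) \le m(a_n,b_n)+o(1)$, completing the proof.
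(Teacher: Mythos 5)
Your proof is correct and takes essentially the same route as the paper's: Lemma \ref{Local} forces near-minimizers into $B_{2b}$ (giving room to rescale), and the mass rescalings $u\mapsto\sqrt{a'/a}\,u$, $v\mapsto\sqrt{b'/b}\,v$ transfer them between $S(a_n,b_n)\cap B_{6b}$ and $S(a,b)\cap B_{6b}$ with $o(1)$ change in $J$. Your explicit handling of the boundary case $a=0$ --- the bump $\sqrt{a_n}\,\phi$ for the $\limsup$ direction and the Gagliardo--Nirenberg interpolation $\|u_n\|_4\to 0$ for the $\liminf$ direction --- is in fact a genuine improvement, since the paper's rescaling $\tilde w_n=\sqrt{a_n/a}\,w_n$ is undefined at $a=0$ and that case is passed over silently there.
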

	\begin{proof}
For $a\ge0$ and $b\in (0, \bar{b})$, taking $a_n\ge0$ and $b_n\in (0, \bar{b})$ such that $\lim\limits_{n\rightarrow\infty}(a_n,b_n)=(a,b).$ By the definition of $m(a_n,b_n)$ and Lemma \ref{Local}, for any $\varepsilon>0$, there exists $(u_n,v_n)\in S(a_n,b_n)\cap B_{2b}$ such that
        \begin{equation}
		J(u_n,v_n)\leq m(a_n,b_n)+\varepsilon.\label{3.2}
		\end{equation}
Setting $w_n=\sqrt{\frac{a}{a_n}}u_n$, $z_n=\sqrt{\frac{b}{b_n}}v_n$, it follows that $(w_n,z_n)\in S(a,b).$  We denote $\epsilon_{1,n}=a-a_n$ and $\epsilon_{2,n}=b-b_n$. Obviously, $|\epsilon_{1,n}|\to0, |\epsilon_{2,n}|\to 0$ as $n\to+\infty$. By direct calculation, for sufficiently large $n$,
\begin{equation}\begin{aligned}
   |\nabla w_n|_2^2+|\nabla z_n|_2^2=&\frac{a}{a_n}\int_{\mathbb{R}^{3}}|\nabla u_n|^2+\frac{b}{b_n}\int_{\mathbb{R}^{3}}|\nabla v_n|^2dx\nonumber\\
   =&\left(1+\frac{\epsilon_{1,n}}{a-\epsilon_{1,n}}\right)|\nabla u_n|_2^2+\left(1+\frac{\epsilon_{2,n}}{b-\epsilon_{2,n}}\right)|\nabla v_n|_2^2\nonumber\\
             <&6b\nonumber,
 \end{aligned}
	\end{equation}
 which implies that $(w_n,z_n)\in B_{6b}$. Hence, by the boundedness of $\left\{(u_n,v_n)\right\}$, we obtain
$$
		\begin{aligned}
		m(a,b)\leq& J(w_n,z_n)\nonumber\\
               =&J(u_n,v_n)+\frac{1}{2}\left(\frac{a}{a_n}-1\right)\int_{\mathbb{R}^{3}}|\nabla u_n|^2dx+\frac{1}{2}\left(\frac{b}{b_n}-1\right)\int_{\mathbb{R}^{3}}|\nabla v_n|^2dx\nonumber\\
               -&\frac{1}{4}\left(\frac{a^2}{a_n^2}-1\right)\int_{\mathbb{R}^{3}}u_n^4dx-\frac{1}{6}\left(\frac{\sqrt{b^3}}{\sqrt{b_n^3}}-1\right)\int_{\mathbb{R}^{3}}v_n^3dx\nonumber\\
               -&\frac{1}{2}\beta\left(\frac{a\sqrt{b}}{a_n\sqrt{b_n}}-1\right)\int_{\mathbb{R}^{3}}u_n^2v_ndx\nonumber\\
               =&J(u_n,v_n)+o(1).
		\end{aligned}$$
In view of \eqref{3.2} we deduce that $$m(a,b)\leq m(a_n,b_n)+\varepsilon+o(1).$$

On the other hand, let $\{(w_n,z_n)\}$ be a minimizing sequence of $J$ at $m(a,b)$. Then, for any $\varepsilon>0$, there exists $(w_n,z_n)\in S(a,b)\cap B_{2b}$ such that $$J(w_n,z_n)\leq m(a,b)+\varepsilon.$$
We set $\tilde {w}_n=\sqrt{\frac{a_n}{a}}w_n, \tilde {z}_n=\sqrt{\frac{b_n}{b}}z_n.$ In a similar manner as above, $(\tilde {w}_n,\tilde {z}_n)\in S(a_n,b_n)\cap B_{6b}$. Moreover, we have
$$
m(a_n,b_n)\leq J(\tilde {w}_n,\tilde {z}_n)\le J(w_n,z_n)+o(1)\leq m(a,b)+\varepsilon+o(1).
$$
Since $\varepsilon$ is arbitrary, we deduce that $\lim\limits_{n\rightarrow\infty}m(a_n,b_n)=m(a,b).$
\end{proof}

\begin{lemma}\label{3.1.4} If $0\leq c\leq a, 0<d< b<\bar{b}$, then $m(a,b) \leq m(c,d)+m(a-c,b-d)$.
\end{lemma}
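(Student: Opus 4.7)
The plan is to establish subadditivity by the classical gluing scheme: pick near-minimizers of $m(c,d)$ and $m(a-c,b-d)$, translate one far from the other so all cross integrals become negligible, sum them, and apply a mild rescaling to restore the exact mass constraints. The compatibility with the ball constraint in the definition of $m$ is that the gradient budgets add: a pair in $B_{6d}$ glued to a pair in $B_{6(b-d)}$ contributes at most $6d+6(b-d)=6b$ to $|\nabla u|_2^2+|\nabla v|_2^2$, precisely the ceiling for $B_{6b}$. A preliminary observation is that the threshold $\bar b$ in Lemma \ref{Local} can be taken non-increasing in $a$ (visible from the explicit form of $g(b,b)$ in its proof), so under the hypotheses $0\le c\le a$ and $0<d<b<\bar b(a)$ one has $d<\bar b(c)$ and $b-d<\bar b(a-c)$, and Lemma \ref{Local} is available for both sub-problems.

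Fix a small $\eps>0$ and select $(u_1,v_1)\in S(c,d)\cap B_{6d}$ and $(u_2,v_2)\in S(a-c,b-d)\cap B_{6(b-d)}$ with $J(u_i,v_i)\le m(\cdot,\cdot)+\eps$. Since $m(c,d)\le\sup_{A_{1,d}}J<\inf_{A_{2,d}}J$ by Lemma \ref{Local}, shrinking $\eps$ if necessary forces $|\nabla u_1|_2^2+|\nabla v_1|_2^2<2d$; likewise $|\nabla u_2|_2^2+|\nabla v_2|_2^2<2(b-d)$. For $y\in\R^3$, set $U_y(x):=u_1(x)+u_2(x-y)$ and $V_y(x):=v_1(x)+v_2(x-y)$. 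Using that translates converge weakly to zero in $L^p(\R^3)$ for $1<p<\infty$, every cross integral of the form $\int \phi(x)\psi(x-y)\,dx$ with $\phi\in L^p,\psi\in L^{p'}$ vanishes as $|y|\to\infty$; grouping factors via H\"older and invoking $H^1(\R^3)\hookrightarrow L^p$ for $2\le p\le 6$ covers every cross term appearing in the mass integrals, the gradient integrals, and the quartic, cubic and mixed nonlinearities. This yields
$$|U_y|_2^2\to a,\quad|V_y|_2^2\to b,\quad|\nabla U_y|_2^2+|\nabla V_y|_2^2\to\sum_{i=1}^{2}\bigl(|\nabla u_i|_2^2+|\nabla v_i|_2^2\bigr)<2b,$$
and $J(U_y,V_y)\to J(u_1,v_1)+J(u_2,v_2)$. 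I would then rescale by $\alpha_y:=\sqrt{a}/|U_y|_2\to 1$ and $\gamma_y:=\sqrt{b}/|V_y|_2\to 1$, obtaining $(\tilde U_y,\tilde V_y):=(\alpha_y U_y,\gamma_y V_y)\in S(a,b)$ whose gradient norm squared still converges to the same limit below $2b$. Hence for $|y|$ large $(\tilde U_y,\tilde V_y)\in S(a,b)\cap B_{6b}$, so $m(a,b)\le J(\tilde U_y,\tilde V_y)$; passing to the limit in $|y|$ and then letting $\eps\to 0^+$ delivers $m(a,b)\le m(c,d)+m(a-c,b-d)$.

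The main technical obstacle will be the vanishing of triple-product cross integrals, specifically terms such as $\int u_1^2\,u_2(\cdot-y)^2\,dx$ or $\int u_1\,u_2(\cdot-y)\,v_2(\cdot-y)\,dx$, for which a direct H\"older bound only gives boundedness. The cleanest remedy is to pair a H\"older-grouped product of the ``fixed'' factors against the translated factor in $L^p/L^{p'}$ duality and invoke weak convergence of translates of $L^{p'}$ functions (verified on the dense subset $C_c^\infty(\R^3)$ and extended via uniform boundedness). An entirely elementary alternative is to pre-approximate each $(u_i,v_i)$ in $H^1\times H^1$ by a compactly supported pair, adjusted by a negligible rescaling to preserve the $L^2$ masses, after which disjoint supports annihilate every cross term once $|y|$ is large enough.
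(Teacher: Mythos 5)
Your proof is correct, but it takes a genuinely different route from the paper's. You use the classical concentration-compactness gluing: near-minimizers for the two sub-problems, confined to the smaller balls $B_{2d}$ and $B_{2(b-d)}$ via Lemma \ref{Local} (the paper performs the same preliminary step, placing both pairs in $B_{2b}$), then translation to infinity to kill all cross terms, and a mass-restoring dilation; the gradient budget $2d+2(b-d)=2b<6b$ keeps the glued pair in $B_{6b}$. The paper instead assumes without loss of generality $u_i,v_i\ge 0$ and combines the two pairs in one stroke through Shibata's rearrangement $\{u_1,u_2\}^*,\{v_1,v_2\}^*$ of Lemma \ref{2.0.10}: property (iii) gives the masses $a$ and $b$ \emph{exactly} (no rescaling and no limit in $y$), (iv) controls the gradients so that $(w_1,w_2)\in S(a,b)\cap B_{6b}$, and (ii),(v) yield $\int_{\mathbb{R}^3}(\{u_1,u_2\}^*)^2\{v_1,v_2\}^*dx\ge\int_{\mathbb{R}^3}(u_1^2v_1+u_2^2v_2)dx$, whence $J(w_1,w_2)\le J(u_1,v_1)+J(u_2,v_2)$ directly. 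What each buys: your argument is elementary, requires no positivity assumption and no rearrangement machinery, but must dispose of the triple-product cross terms (your weak-convergence-of-translates argument, or the compact-support approximation, is a standard and adequate remedy) and check stability of $J$ and of the ball constraint under the dilations $\alpha_y,\gamma_y\to1$, which you do; the paper's rearrangement is shorter here and, more importantly, is the tool that later produces the \emph{strict} inequality $J(\{\tilde{u},\tilde{w}\}^*,\{\tilde{v},\tilde{z}\}^*)<J(\tilde{u},\tilde{v})+J(\tilde{w},\tilde{z})$ in Case 1 of the compactness Lemma \ref{3.2.1}, where translation gluing can only give $\le$ — which is presumably why the paper installs it already at this stage. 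Your observation that $\bar{b}(a)$ may be taken non-increasing in $a$, so that Lemma \ref{Local} is applicable to the sub-masses $(c,d)$ and $(a-c,b-d)$, is correct (it is visible from the explicit form of $g(b,b)$) and makes explicit a point the paper uses tacitly; only trivial adjustments are needed in the degenerate cases $c=0$ or $c=a$.
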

\begin{proof}
For any $\varepsilon>0$, there exist $(u_1,v_1)\in S(c,d)\cap B_{2b}$, $(u_2,v_2)\in S(a-c,b-d)\cap B_{2b}$ such that
\begin{equation*}\label{3.6}
J(u_1,v_1)\leq m(c,d)+\frac{\varepsilon}{2},\quad J(u_2,v_2)\leq m(a-c,b-d)+\frac{\varepsilon}{2}.
\end{equation*}
Without losing generality, we assume $u_1,u_2,v_1,v_2\geq0$. Let $w_1:=\{u_1,u_2\}^*,w_2:=\{v_1,v_2\}^*$. By Lemma \ref{2.0.10} (iii), we get
$$|w_1|_2^2=|u_1|_2^2+|u_2|_2^2=a,\quad |w_2|_2^2=|v_1|_2^2+|v_2|_2^2=b,$$
$$|\nabla w_1|_2^2+|\nabla w_2|_2^2<|\nabla u_1|_2^2+|\nabla u_2|_2^2+|\nabla v_1|_2^2+|\nabla v_2|_2^2<6b.$$
Therefore, $(w_1,w_2)\in S(a,b)\cap B_{6b}.$ From (ii)-(v) in Lemma \ref{2.0.10} and the definition of $m(a,b)$, it follows that
$$
		\begin{aligned}
		m(a,b)\leq& J(w_1,w_2)\nonumber\\
              =&\frac{1}{2}\int_{\mathbb{R}^{3}}(|\nabla w_1|^2+|\nabla w_2|^2)dx-\frac{1}{4}\int_{\mathbb{R}^{3}}w_1^4dx\nonumber \\
		       -&\frac{1}{6}\int_{\mathbb{R}^{3}}w_2^3dx-\frac{1}{2}\beta\int_{\mathbb{R}^{3}}w_1^2w_2dx\nonumber\\
             \leq& \frac{1}{2}\int_{\mathbb{R}^{3}}(|\nabla u_1|^2+|\nabla u_2|^2+|\nabla v_1|^2+|\nabla v_2|^2)dx\nonumber-\frac{1}{4}\int_{\mathbb{R}^{3}}(u_1^4+u_2^4)dx\\
               -&\frac{1}{6}\int_{\mathbb{R}^{3}}(v_1^3+v_2^3)dx-\frac{1}{2}\beta\int_{\mathbb{R}^{3}}(u_1^2v_1+u_2^2v_2)dx\nonumber\\
               =&J(u_1,v_1)+J(u_2,v_2)\nonumber\\
               \leq&m(c,d)+m(a-c,b-d)+\varepsilon.
		\end{aligned}
		$$
Since $\varepsilon$ is arbitrary, we get $m(a,b) \leq m(c,d)+m(a-c,b-d).$
\end{proof}
\begin{lemma}\label{3.1.5}
Let $\left\{(u_n,v_n)\right\}\subset S(a,b)\cap B_{6b}$ be a bounded minimizing sequence for $m(a,b)$. Then there exist $\alpha>0$ and a sequence $\left\{y_n\right\} \subset \mathbb{R}^3$ such that
\begin{align}\label{3.7}
\limsup_{n\rightarrow\infty}\int_{B_2(y_n)}|(u_n,v_n)|^2dx\geq \alpha>0.
\end{align}
	\end{lemma}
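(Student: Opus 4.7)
The plan is to argue by contradiction using the vanishing alternative of the Lions concentration-compactness principle. Suppose the conclusion fails. Then, equivalently,
$$\lim_{n\to\infty}\sup_{y\in\mathbb{R}^3}\int_{B_2(y)}\bigl(|u_n|^2+|v_n|^2\bigr)dx=0.$$
Since $\{(u_n,v_n)\}\subset S(a,b)\cap B_{6b}$ is bounded in $W$ (the $L^2$-norms are fixed at $\sqrt{a},\sqrt{b}$ and the Dirichlet norms are controlled by $6b$), Lions' vanishing lemma applied to each component yields
$$u_n\to 0\ \text{ and }\ v_n\to 0\quad\text{strongly in }L^p(\mathbb{R}^3)\text{ for every }p\in(2,6).$$

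Next I would use this convergence to kill all nonlinear terms of $J(u_n,v_n)$. Since $4,3\in(2,6)$, one has $\int_{\mathbb{R}^3}u_n^4\,dx\to 0$ and $\int_{\mathbb{R}^3}v_n^3\,dx\to 0$; for the coupling term, Hölder's inequality gives
$$\Bigl|\int_{\mathbb{R}^3}u_n^2v_n\,dx\Bigr|\le \|u_n\|_3^2\,\|v_n\|_3\to 0.$$
Consequently
$$J(u_n,v_n)=\tfrac12\bigl(\|\nabla u_n\|_2^2+\|\nabla v_n\|_2^2\bigr)+o(1)\ge o(1),$$
and passing to the limit produces $m(a,b)\ge 0$. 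This contradicts Lemma \ref{3.1.2}, which asserts $m(a,b)<0$, completing the argument.

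The proof is essentially routine once Lions' vanishing lemma is invoked, so I do not expect a genuine obstacle here; the only nontrivial input is the strict negativity $m(a,b)<0$, which has already been secured in Lemma \ref{3.1.2} via the scaling $u_t(x)=t^{3/2}u(tx)$. I note that the radius $2$ and the specific choice $B_2(y_n)$ play no special role: any fixed positive radius suffices, because Lions' lemma works uniformly in $R>0$ for $H^1$-bounded sequences.
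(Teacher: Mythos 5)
Your proposal is correct and follows essentially the same route as the paper: a contradiction argument via Lions' vanishing lemma, which forces all three nonlinear terms of $J(u_n,v_n)$ to vanish (the paper leaves the H\"older estimate for the coupling term $\int_{\mathbb{R}^3}u_n^2v_n\,dx$ implicit, which you supply), so that $\liminf_n J(u_n,v_n)\ge 0$, contradicting $m(a,b)<0$ from Lemma \ref{3.1.2}. No gaps; your closing remark that the radius $2$ is inessential is also accurate.
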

\begin{proof}
We assume by contradiction that \eqref{3.7} does not hold. Then by the Lions lemma (see \cite{W1996}) we get $ (u_n,v_n)\rightarrow(0,0)$ in $ L^p(\mathbb{R}^{3})\times L^p(\mathbb{R}^{3})$ for all $p\in(2,6)$, which implies that
$$\liminf\limits_{n\rightarrow\infty}J(u_n,v_n)=\liminf\limits_{n\rightarrow\infty}\int_{\mathbb{R}^{3}}(|\nabla u_n|^2+|\nabla v_n|^2)dx\geq0.$$
This contradicts the fact that $m(a,b)<0$.
\end{proof}

By similar arguments as in (see \cite{W1996}), we get the following Brezis-Lieb type result.
\begin{lemma}\label{3.1.6} If $(u_n,v_n)\rightharpoonup (u,v)\in W$, then
	\begin{eqnarray*}\label{4.24.1}
			\int_{\mathbb{R}^{3}}[u_n^2v_n-(u_n-u)^2(v_n-v)]dx=\int_{\mathbb{R}^{3}}u^2vdx+o(1).
\end{eqnarray*}
	\end{lemma}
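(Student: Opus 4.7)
The plan is to expand the difference via the substitution $w_n := u_n - u$, $z_n := v_n - v$, reducing the claim to four integrals that are handled by weak $L^2$ convergence, except for one mixed term requiring a localization argument.

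After writing $u_n = u + w_n$, $v_n = v + z_n$, a direct expansion yields the pointwise identity
\begin{equation*}
u_n^2 v_n - (u_n - u)^2 (v_n - v) - u^2 v = 2 u v w_n + w_n^2 v + u^2 z_n + 2 u w_n z_n,
\end{equation*}
so the claim reduces to showing that the integral of each term on the right is $o(1)$. From $(u_n,v_n) \rightharpoonup (u,v)$ in $W$ I extract (along a subsequence) a.e.\ convergence $w_n, z_n \to 0$, while the Sobolev embedding $H^1(\mathbb{R}^3) \hookrightarrow L^p(\mathbb{R}^3)$ for $p \in [2,6]$ gives $w_n, z_n \rightharpoonup 0$ weakly in $L^2$. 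Since $uv \in L^2$ and $u^2 \in L^2$ (by H\"older with $u \in L^6 \cap L^4$ and $v \in L^3$), the first and third integrals vanish immediately. For the second, $w_n^2$ is bounded in $L^2$ because $\|w_n^2\|_2^2 = \|w_n\|_4^4$ and $H^1 \hookrightarrow L^4$, while $w_n^2 \to 0$ a.e.; these together force $w_n^2 \rightharpoonup 0$ in $L^2$, so $\int w_n^2 v \, dx \to 0$.

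The main obstacle will be the mixed term $\int u w_n z_n \, dx$, where neither factor converges strongly. My plan is to split the integral into $\int_{B_R}$ and $\int_{B_R^c}$ for a parameter $R>0$. On $B_R$, H\"older with exponents $(6,3,2)$ combined with the Rellich--Kondrachov compact embedding $H^1(B_R) \hookrightarrow L^3(B_R)$ yields
\begin{equation*}
\left| \int_{B_R} u w_n z_n \, dx \right| \le \|u\|_6 \, \|w_n\|_{L^3(B_R)} \, \|z_n\|_2 \to 0
\end{equation*}
for each fixed $R$. On the complement, the same H\"older estimate gives $\bigl|\int_{B_R^c} u w_n z_n \, dx\bigr| \le C \, \|u\|_{L^6(B_R^c)}$, which is small for $R$ large since $u \in L^6(\mathbb{R}^3)$. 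A standard $\varepsilon/2$ argument then delivers $\int u w_n z_n \, dx \to 0$, completing the proof.
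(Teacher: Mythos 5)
Your proof is correct. It is worth noting that the paper gives no written proof of this lemma at all: it simply appeals to ``similar arguments as in'' Willem's book, i.e.\ the classical Brezis--Lieb technique, in which one dominates the pointwise difference $|u_n^2v_n-(u_n-u)^2(v_n-v)-u^2v|$ by $\varepsilon\,h_n$ plus an integrable function (via Young-type inequalities) and concludes with Fatou's lemma. Your route is genuinely different and, for this polynomial nonlinearity, arguably cleaner: the exact algebraic expansion $u_n^2v_n-w_n^2z_n-u^2v=2uvw_n+w_n^2v+u^2z_n+2uw_nz_n$ (which I checked) reduces everything to four explicit remainder terms. Three of them die by weak $L^2$ convergence tested against the fixed functions $uv,\ v,\ u^2\in L^2(\mathbb{R}^3)$ (for $w_n^2\rightharpoonup 0$ you correctly combine $L^2$-boundedness of $w_n^2$ with a.e.\ convergence), and the mixed term $\int uw_nz_n\,dx$ is handled by a ball/tail splitting: Rellich--Kondrachov on $B_R$ and smallness of $\|u\|_{L^6(B_R^c)}$ outside. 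What your approach buys is a self-contained, elementary argument with all exponents explicit; what the Brezis--Lieb route buys is generality (it needs no compact embedding and extends to non-polynomial nonlinearities), which is presumably why the authors cite it rather than expand. Two small points to tighten: the a.e.\ convergence of $w_n,z_n$ is only along a subsequence, so you should add the standard remark that since every subsequence admits a further subsequence along which each remainder integral tends to the same limit $0$, the full sequence converges; and $u^2\in L^2$ follows already from $u\in L^4(\mathbb{R}^3)$, so the mention of $v\in L^3$ there is superfluous.
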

\begin{proof}[Proof of Lemma \ref{3.2.1}]
By the boundedness of $\{(u_n,v_n)\}$ and Lemma $\ref{3.1.5}$, there exists a sequence $\{y_n\}\subset \mathbb{R}^{3}$ and $(u,v)\in W\setminus\{(0,0)\}$such that
\begin{itemize}
\item $\left(u_n(x+y_n),v_n(x+y_n)\right) \rightharpoonup (u,v)\ \ \text{in}\ W$;

\item $\left(u_n(x+y_n),v_n(x+y_n)\right)\rightarrow (u,v)\ \text{in}\ L_{loc}^{p}(\mathbb{R}^{3})\times L_{loc}^{p}(\mathbb{R}^{3})\ \text{for\ all}\ p\in[2,6)$;

\item $u_n(x+y_n)\rightarrow u(x), v_n(x+y_n)\rightarrow v(x)\ a.e.\ \ x\in\mathbb{R}^{3}.$
\end{itemize}
It is easily seen that $v\not\equiv 0$.
Let $\phi_n(x)=u_n(x+y_n)-u(x)$ and $\psi_n(x)=v_n(x+y_n)-v(x).$ We shall prove that $\left(\phi_n(x),\psi_n(x)\right)\rightarrow(0,0)\ \text{in}\ W.$ Using the Brezis-Lieb lemma, we get
\begin{equation}
		|\phi_n|_2^2=|u_n|_2^2-|u|_2^2+o(1)=a-|u|_2^2+o(1),\label{2.18}
		\end{equation}
		\begin{equation}
|\psi_n|_2^2=|v_n|_2^2-|v|_2^2+o(1)=b-|v|_2^2+o(1).\label{2.19}
\end{equation}
From weak convergence, it follows that
		\begin{equation}
		|\nabla\phi_n|_2^2=|\nabla u_n|_2^2-|\nabla u|_2^2+o(1),  \label{2.20}
		\end{equation}
\begin{equation}
|\nabla\psi_n|_2^2=|\nabla v_n|_2^2-|\nabla v|_2^2+o(1).\label{2.21}
\end{equation}
Furthermore, combining these and Lemma \ref{3.1.6}, there holds
		\begin{equation}
		J(u_n,v_n)=J(\phi_n,\psi_n)+J(u,v)+o(1) .\label{3-10}
		\end{equation}

We claim that $|\phi_n|_2^2 \rightarrow 0$, $|\psi_n|_2^2 \rightarrow 0.$\ It suffices to show that $|u|_2^2=a$ and $|v|_2^2=b$.\ By \eqref{2.18}-\eqref{2.21},\  for $n$ large enough, we have $$|\phi_n|_2^2 \leq a ,\quad |\psi_n|_2^2 \leq b,\quad|\nabla \phi_n|_2^2+|\nabla \psi_n|_2^2 \leq|\nabla u_n|_2^2+|\nabla v_n|_2^2\leq6b.$$ Hence, $\left(\phi_n,\psi_n\right)\in S\left(|\phi_n|_2^2,|\psi_n|_2^2\right)\cap B_{6b}$ and $J\left(\phi_n,\psi_n\right)\geq m\left(|\phi_n|_2^2,|\psi_n|_2^2\right)$.\ Recalling that $J(u_n,v_n)\rightarrow m(a,b)$, in view of \eqref{3-10}, we deduce that
\begin{equation*}
		m(a,b) \geq m\left(|\phi_n|_2^2,|\psi_n|_2^2\right)+J(u,v)+o(1) .\label{3.10}
		\end{equation*}
Since $m(a,b)$ is continuous, using \eqref{2.18}-\eqref{2.19}, we infer
$m(|\phi_n|^2_2,|\psi_n|^2_2)\rightarrow m\left(a-|u|_2^2,b-|v|_2^2\right)$.\ Then
		\begin{equation}
		m(a,b) \geq m\left(a-|u|_2^2,b-|v|_2^2\right)+J(u,v) .\label{219}
		\end{equation}
By the weak limit, we also have that $(u,v)\in S\left(|u|_2^2,|v|_2^2\right)\cap B_{6b}$. This implies that $J(u,v) \geq m(|u|_2^2,|v|_2^2)$. If $J(u,v)>m(|u|_2^2,|v|_2^2)$, it then follows from \eqref{219} and Lemma \ref{3.1.4} that
	\begin{equation*}
m(a,b)> m\left(a-|u|_2^2+|u|_2^2,b-|v|_2^2+|v|_2^2\right)=m(a,b),
	\end{equation*}
		which is impossible. Thus, we obtain $J(u,v)=m(|u|_2^2,|v|_2^2)$, and hence
\begin{align}\label{3-15}
m(a,b)=m\left(a-|u|_2^2,b-|v|_2^2\right)+m\left(|u|_2^2,|v|_2^2\right).
\end{align}

We divide three cases to prove $|u|_2^2=a$, $|v|_2^2=b$.

Case 1. If $0\leq |u|_2^2<a, 0<|v|_2^2<b,$ we suppose that $\{(w_n,z_n)\}$ is a minimizing sequence of $J$ at $ m\left(a-|u|_2^2,b-|v|_2^2\right)$. Clearly, $\{(w_n,z_n)\}$ is bounded in $W$. Then there exists a sequence $\{y_n\} \subset \mathbb{R}^{3}$ and $(w,z)\in W\setminus\{(0,0)\}$ such that
\begin{itemize}
\item $\left(w_n(x+y_n),~ z_n(x+y_n)\right) \rightharpoonup (w,z)\ \text{in}\ W$;

\item $\left(w_n(x+y_n),~ z_n(x+y_n)\right)\rightarrow (w,z)\ \text{in}\ L_{loc}^{p}(\mathbb{R}^{3})\times L_{loc}^{p}(\mathbb{R}^{3})\ \text{for\ all}\ p\in[2,6)$;

\item $w_n(x+y_n)\rightarrow w(x),~z_n(x+y_n)\rightarrow z(x)\ a.e.\ \ x\in\mathbb{R}^{3}.$
\end{itemize}
Moreover, by similar arguments as above we get
\begin{align}\label{3-17}
		          m\left(a-|u|_2^2,b-|v|_2^2\right)=m\left(a-|u|_2^2-|w|_2^2,b-|v|_2^2-|z|_2^2\right)+m\left(|w|_2^2,|z|_2^2\right),
		        \end{align}
and $J(w,z)=m\left(|w|_2^2,|z|_2^2\right)$. Let $\tilde{u},\tilde{v},\tilde{w},\tilde{z}$ be the classical Schwartz symmetric decreasing rearrangement of $|u|,|v|,|w|,|z|$. According to \cite{duichen}, $\tilde{u}$ has the following properties\\$(i)$ $\tilde{u}$ is nonegative;\\$(ii) $
	$	\int_{\mathbb{R}^{3}}\left|\nabla \tilde{u}\right|^2 d \xi \leq \int_{\mathbb{R}^{3}}\left|\nabla(\left|u\right|)\right|^2 d \xi \leq \int_{\mathbb{R}^{3}}\left|\nabla u\right|^2 d\xi;$\\$(iii)$
		$\int_{\mathbb{R}^{3}}\left|\tilde{u}\right|^t d \xi=\int_{\mathbb{R}^{3}}\left|u\right|^t d \xi,\  t \in[1, \infty);$  \\$(iv)$ $\int_{\mathbb{R}^{3}}\tilde{u}^2\tilde{v}dx\geq\int_{\mathbb{R}^{3}}u^2vdx.$
		\\Firstly, we can see that $\tilde{v},\tilde{z}\neq0$ since $v, z\neq0$. Moreover, by (ii)-(iv), one has $J(\tilde{u},\tilde{v})\le J(u,v)$.
We can deduce from $J(u,v)= m(|u|_2^2,|v|_2^2)$ and $m(|u|_2^2,|v|_2^2)=m(|\tilde{u}|_2^2,|\tilde{v}|_2^2)$ that		
\begin{align}\label{3-18}
		         J(\tilde{u},\tilde{v})=m(|u|_2^2,|v|_2^2).
		        \end{align}
Similarly, we obtain
\begin{align}\label{3-19}
		         J(\tilde{w},\tilde{z})=m(|w|_2^2,|z|_2^2).
		        \end{align}
Therefore, $\left(\tilde{u},\tilde{v}\right),\left(\tilde{w},\tilde{z}\right)$ are both local minimizers of $J$ on $S\left(|w|_2^2,|z|_2^2\right)\cap B_{6b}$. According to the elliptic regularity theory and strong maximum principle, it follows that $\tilde{u},\tilde{v},\tilde{w},\tilde{z}\in C^2(\mathbb{R}^{3})$ and $\tilde{v},\ \tilde{z}>0$. Hence, by Lemma \ref{2.0.10} (ii)-(v) we get $$\int_{\mathbb{R}^{3}}\left|\nabla\{\tilde{v},\tilde{z}\}^*\right|^2dx<\int_{\mathbb{R}^{3}}(|\nabla \tilde{v}|^2+|\nabla \tilde{z}|^2)dx.$$
Similarly, by Lemma \ref{2.0.10} (ii)-(v), we have
$$\int_{\mathbb{R}^{3}}\left|\nabla\{\tilde{u},\tilde{w}\}^*\right|^2dx\leq\int_{\mathbb{R}^{3}}(|\nabla \tilde{u}|^2+|\nabla \tilde{w}|^2)dx.$$
Moreover,
$$
\int_{\mathbb{R}^{3}}\left(\{\tilde{u},\tilde{w}\}^*\right)^2\{\tilde{v},\tilde{z}\}^*dx=\int_{\mathbb{R}^{3}}\left\{ \tilde{u}^2,\tilde{w}^2\right\}^*\left\{ \tilde{v},\tilde{z}\right\}^*dx\geq\int_{\mathbb{R}^{3}}\tilde{u}^2\tilde{v}+\tilde{w}^2\tilde{z}dx,
$$
\begin{equation}
\left|\{\tilde{u},\tilde{w}\}^*\right|_2^2=|u|_2^2+|w|_2^2,\quad\left|\{\tilde{v},\tilde{z}\}^*\right|_2^2=|v|_2^2+|z|_2^2.
\nonumber
\end{equation}
Therefore,
\begin{equation*}		
J(\{\tilde{u},\tilde{w}\}^*,\{\tilde{v},\tilde{z}\}^*)<J(\tilde{u},\tilde{v})+J(\tilde{w},\tilde{z})\label{3.20}		
\end{equation*}
and
\begin{equation*}
J(\{\tilde{u},\tilde{w}\}^*,\{\tilde{v},\tilde{z}\}^*)\ge m\left(\left|\{\tilde{u},\tilde{w}\}^*\right|_2^2,\left|\{\tilde{v},\tilde{z}\}^*\right|_2^2\right)
\ge m\left(|u|_2^2+|w|_2^2,|v|_2^2+|z|_2^2\right).\label{3.21}
\end{equation*}
Hence, we get
\begin{align}\label{3-23}
		         m\left(|u|_2^2+|w|_2^2,|v|_2^2+|z|_2^2\right)\leq J(\{\tilde{u},\tilde{w}\}^*,\{\tilde{v},\tilde{z}\}^*)<J(\tilde{u},\tilde{v})+J(\tilde{w},\tilde{z}).
		        \end{align}
By \eqref{3-15}-\eqref{3-17}, we can obtain
\begin{align}\label{3-24}
m(|u|_2^2,|v|_2^2)=&m(a,b)-m\left(a-|u|_2^2,b-|v|_2^2\right)\nonumber\\	
=&m(a,b)-m\left(|w|_2^2,|z|_2^2\right)-m\left(a-|u|_2^2-|w|_2^2,b-|v|_2^2-|z|_2^2\right).
		        \end{align}
Combining \eqref{3-18}-\eqref{3-24} with Lemma \ref{3.1.4}, one has
\begin{align}
m\left(|u|_2^2+|w|_2^2,|v|_2^2+|z|_2^2\right)<&m(|u|_2^2,|v|_2^2)+m(|w|_2^2,|z|_2^2)\nonumber\\
=& m(a,b)-m\left(a-|u|_2^2-|w|_2^2,b-|v|_2^2-|z|_2^2\right)\nonumber\\
\leq&m\left(|u|_2^2+|w|_2^2,|v|_2^2+|z|_2^2\right).\nonumber
\end{align}
This provide a contradiction.

Case 2. If $|u|_2^2=a$, $0<|v|_2^2<b,$ then there exists a unique positive radial solution $w_0$ such that $J(0,w_0)=m(0,b-|v|_2^2)$. By the elliptic regularity theory, we obtain $w_0\in C^2(\mathbb{R}^{3})$. Denote by $\tilde{w}_0$ the classical Schwartz symmetric decreasing rearrangement of $w_0$. Then
\begin{equation*}
\left|\tilde{w}_0\right|_r^r=|w_0|_r^r,\ \forall r\in(1,+\infty),\quad\left|\nabla\tilde{w}_0\right|_2^2\leq|\nabla w_0|_2^2.
\end{equation*}
Take $\tilde{u}$, $\tilde{v}$ as above such that
$J(\tilde{u},\tilde{v})=m(|u|_2^2,|v|_2^2).$
By similar arguments as in Case 1, we infer
\begin{align}		
J(\{\tilde{u},0\}^*,\{\tilde{v},\tilde{w}_0\}^*)<&\frac{1}{2}\int_{\mathbb{R}^{3}}(|\nabla \tilde{u}|^2+|\nabla \tilde{v}|^2+|\nabla \tilde{w}_0|^2)dx-\frac{1}{4}\int_{\mathbb{R}^{3}}\tilde{u}^4dx\nonumber\\
-&\frac{1}{6}\int_{\mathbb{R}^{3}}\left(\tilde{v}^3+\tilde{w}_0^3\right)dx-\frac{1}{2}\beta\int_{\mathbb{R}^{3}}\tilde{v}^2\tilde{w}_0dx\nonumber\\
=&J(\tilde{u},\tilde{v})+J(0,\tilde{w}_0).\nonumber
\end{align}
Then, using \eqref{3-19}, we obtain
\begin{equation*}
m(a,b)
<J(\tilde{u},\tilde{v})+J(0,\tilde{w}_0)
=m(a,|v|_2^2)+m(0,b-|v|_2^2)
= m(a,b),
 \end{equation*}
which gives a contradiction.

Case 3. If $0\le|u|_2^2<a, |v|_2^2=b,$ then it is easily seen that $v_n\to v$ in $L^2(\mathbb{R}^3)$. By the Gagliardo-Nirenberg inequality and boundedness of $\{v_n\}$ in $H^1(\mathbb{R}^3)$ we get $v_n\to v$ in $L^p(\mathbb{R}^3)$ for $p\in [2, 6)$. In view of the second equation of system \eqref{1.1}, by the H\"older inequality we deduce that
\begin{eqnarray*}
&&\int_{\mathbb{R}^3}|\nabla (v_n-v)|^2dx+\lambda_2\int_{\mathbb{R}^3}|v_n-v|^2dx\\
&&=\frac{1}{2}\int_{\mathbb{R}^3}(v_n^2-v^2)(v_n-v)dx+\frac{1}{2}\beta \int_{\mathbb{R}^3}(u_n^2-u^2)(v_n-v)dx+o(1)\\
&&\le\frac{1}{2}\int_{\mathbb{R}^3}(v_n^3-v^3)dx+\frac{1}{2}\beta \left(\int_{\mathbb{R}^3}(u_n^2-u^2)^2dx\right)^{\frac{1}{2}}\left(\int_{\mathbb{R}^3}(v_n-v)^2dx\right)^{\frac{1}{2}}+o(1)\\
&&=o(1),
\end{eqnarray*}
which together with Lemma \ref{2.0.13} implies that $v_n\to v$ in $H^1(\mathbb{R}^3)$. Using the second equation again we get
 \begin{eqnarray*}
&&\int_{\mathbb{R}^3}\nabla (v_n-v)\cdot \nabla (u_n-u)dx+\lambda_2\int_{\mathbb{R}^3}(v_n-v)(u_n-u)dx\\
&&=\frac{1}{2}\int_{\mathbb{R}^3}(v_n^2-v^2)(u_n-u)dx+\frac{1}{2}\beta \int_{\mathbb{R}^3}(u_n^2-u^2)(u_n-u)dx+o(1),
\end{eqnarray*}
which implies that $u_n\to u$ in $L^3(\mathbb{R}^3)$. Then using the interpolation inequality we get $u_n\to u$ in $L^p(\mathbb{R}^3)$ for $p\in[3,6)$.
By the first equation of system \eqref{1.1},
\begin{align*}
\int_{\mathbb{R}^3}|\nabla (u_n-u)|^2dx+\lambda_1\int_{\mathbb{R}^3}|u_n-u|^2dx=&\int_{\mathbb{R}^3}(u_n^3-u^3)(u_n-u)dx+\beta \int_{\mathbb{R}^3}(u_nv_n-uv)(u_n-u)dx+o(1)\\
\le&\left(\int_{\mathbb{R}^3}(u_n^3-u^3)^2dx\right)^{\frac{1}{2}}\left(\int_{\mathbb{R}^3}(u_n-u)^2dx\right)^{\frac{1}{2}}\\
+&\beta \int_{\mathbb{R}^3}\left[(u_n-u)v_n+u(v_n-v)\right](u_n-u)dx+o(1)\\
\le&\left(\int_{\mathbb{R}^3}(u_n^3-u^3)^2dx\right)^{\frac{1}{2}}\left(\int_{\mathbb{R}^3}(u_n-u)^2dx\right)^{\frac{1}{2}}\\
+&\beta \left(\int_{\mathbb{R}^3}(u_n-u)^4dx\right)^{\frac{1}{2}}\left(\int_{\mathbb{R}^3}v_n^2dx\right)^{\frac{1}{2}}\\
+&\beta\left(\int_{\mathbb{R}^3}u^4dx\right)^{\frac{1}{4}}\left(\int_{\mathbb{R}^3}(u_n-u)^4dx\right)^{\frac{1}{4}}\left(\int_{\mathbb{R}^3}(v_n-v)^2dx\right)^{\frac{1}{2}}+o(1)\\
=&o(1),
\end{align*}
which implies that $u_n\to u$ in $H^1(\mathbb{R}^3)$ and thus $|u|_2^2=a$. This produces a contradiction.

To sum up, $|u|_2^2=a, |v|_2^2=b$. Hence, we obtain
$$
\left(u_n(x+y_n),v_n(x+y_n)\right) \rightarrow (u,v)~~\mbox{in}~~L^2(\mathbb{R}^3).$$ From the Gagliardo-Nirenberg inequality and $|\phi_n|_2^2\rightarrow 0$, we obtain $|\phi_n|_q^q \rightarrow 0$ for $q\in[2,6)$.
Similarly, we have $|v_n(x+y_n)-v|_r^r\rightarrow0$ for $r\in[2,6)$. Then, it follows from the H\"older inequality that,
\begin{align}
&\int_{\mathbb{R}^{3}}\left(u_n^2(x+y_n)v_n(x+y_n)-u^2v\right)dx\nonumber\\
=&\int_{\mathbb{R}^{3}}\left(u_n^2(x+y_n)v_n(x+y_n)-u_n^2(x+y_n)v+u_n^2(x+y_n)v-u^2v\right)dx\nonumber\\
\leq&|u_n(x+y_n)|_4^2|v_n(x+y_n)-v|_2+|u_n(x+y_n)-u|_3|u_n(x+y_n)+u|_3|v|_3\rightarrow0.\nonumber
\end{align}
Since $(u,v) \in S(a,b)\cap B_{6b}$, one can see
\begin{align}
m(a,b)\leq J(u,v)\leq&\liminf_{n\rightarrow\infty}J(u_n(x+y_n),v_n(x+y_n))\nonumber\\
=&\liminf_{n\rightarrow\infty}J(u_n,v_n)\nonumber\\
=&m(a,b).\nonumber
\end{align}
We then conclude that
$(u_n(x+y_n),v_n(x+y_n))\rightarrow (u,v)$ in $W$.
\end{proof}

\subsection{Ground states}\label{ground}

Firstly, we have the following Nehari-Pohozaev identity.
\begin{lemma}\label{pohozaev}
		 Let $(u,v)\in H^1(\mathbb{R}^{3})\times H^1(\mathbb{R}^{3})$ be a solution of system $\eqref{1.1}$. Then,
 $$
	\begin{gathered}
	P(u,v):=\int_{\mathbb{R}^{3}}(|\nabla u|^2+|\nabla v|^2)dx-\frac{3}{4}\int_{\mathbb{R}^{3}}u^4dx-\frac{1}{4}\int_{\mathbb{R}^{3}}v^3dx-\frac{3}{4}\beta\int_{\mathbb{R}^{3}}u^2vdx=0.
	\end{gathered}
	$$
	\end{lemma}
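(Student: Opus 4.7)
The plan is to derive the Pohozaev-type identity $P(u,v)=0$ by combining two standard ingredients: the pair of Nehari identities coming from testing each equation against its own unknown, and the classical Pohozaev identity obtained via the scaling $u_t(x)=u(tx)$, $v_t(x)=v(tx)$. Elliptic regularity (invoked already in the proof of Lemma \ref{2.0.13}) gives us $u,v\in C^2(\mathbb{R}^3)$ with sufficient decay, so all integrations by parts below are legitimate.

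First I would multiply the first equation of \eqref{1.1} by $u$ and the second by $v$, integrating by parts to kill the boundary terms, and obtain the two Nehari identities
\begin{align*}
\int_{\mathbb{R}^3}|\nabla u|^2dx+\lambda_1\int_{\mathbb{R}^3}u^2dx&=\int_{\mathbb{R}^3}u^4dx+\beta\int_{\mathbb{R}^3}u^2v\,dx,\\
\int_{\mathbb{R}^3}|\nabla v|^2dx+\lambda_2\int_{\mathbb{R}^3}v^2dx&=\frac{1}{2}\int_{\mathbb{R}^3}v^3dx+\frac{1}{2}\beta\int_{\mathbb{R}^3}u^2v\,dx.
\end{align*}

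Next, I would establish the Pohozaev identity for the coupled system. The cleanest way is to observe that any solution $(u,v)$ is a critical point of the augmented functional
$$E(u,v)=\frac{1}{2}\int(|\nabla u|^2+|\nabla v|^2)dx+\frac{\lambda_1}{2}\int u^2dx+\frac{\lambda_2}{2}\int v^2dx-\frac{1}{4}\int u^4dx-\frac{1}{6}\int v^3dx-\frac{\beta}{2}\int u^2v\,dx,$$
so that $\frac{d}{dt}E(u_t,v_t)\big|_{t=1}=0$ with $u_t(x)=u(tx)$, $v_t(x)=v(tx)$. Using $\|\nabla f_t\|_2^2=t^{-1}\|\nabla f\|_2^2$ and $\int f_t^p dx=t^{-3}\int f^p dx$ for the other terms, differentiating yields the Pohozaev identity
$$\frac{1}{2}\int(|\nabla u|^2+|\nabla v|^2)dx+\frac{3\lambda_1}{2}\int u^2dx+\frac{3\lambda_2}{2}\int v^2dx=\frac{3}{4}\int u^4dx+\frac{1}{2}\int v^3dx+\frac{3\beta}{2}\int u^2v\,dx.$$
Equivalently, this can be obtained directly by testing the first equation against $x\cdot\nabla u$ and the second against $x\cdot\nabla v$, summing, and exploiting the symmetric coupling $\int[v(x\cdot\nabla u^2)+u^2(x\cdot\nabla v)]dx=\int x\cdot\nabla(u^2v)dx=-3\int u^2v\,dx$, which is the only mildly delicate point in the derivation.

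Finally, I would eliminate the $\lambda_i\|u\|_2^2$ and $\lambda_i\|v\|_2^2$ terms by substituting the two Nehari identities into the Pohozaev identity. A direct computation gives
$$\int_{\mathbb{R}^3}(|\nabla u|^2+|\nabla v|^2)dx=\frac{3}{4}\int_{\mathbb{R}^3}u^4dx+\frac{1}{4}\int_{\mathbb{R}^3}v^3dx+\frac{3}{4}\beta\int_{\mathbb{R}^3}u^2v\,dx,$$
which is exactly $P(u,v)=0$. The only conceptual obstacle is the cross term from the coupling $\beta uv$ (respectively $\frac{1}{2}\beta u^2$), which would fail to combine into a total divergence if the prefactors on the two cross terms were not matched; the way the system \eqref{1.1} is written (with the factor $\frac{1}{2}$ in front of $\beta u^2$) is precisely what makes the Pohozaev computation close.
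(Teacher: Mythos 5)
Your proposal is correct and takes essentially the same route as the paper: the paper derives the Pohozaev identity by testing the equations against $x\cdot\nabla u$ and $x\cdot\nabla v$ (your ``equivalent'' formulation, with the cross terms $\frac{\beta}{2}\int_{\mathbb{R}^3}u^2\,x\cdot\nabla v\,dx$ cancelling exactly as you observe via the total divergence $x\cdot\nabla(u^2v)$), then eliminates $\lambda_1,\lambda_2$ using the same two Nehari identities. The only cosmetic difference is that you lead with the scaling computation $\frac{d}{dt}E(u_t,v_t)\big|_{t=1}=0$, which strictly speaking needs the same regularity and decay of $(u,v)$ that justifies the integrations by parts (the path $t\mapsto(u_t,v_t)$ is not $C^1$ in $H^1\times H^1$ for arbitrary $H^1$ functions), so it is a repackaging of the paper's argument rather than a genuinely different one.
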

\begin{proof}
By the regularity arguments, it follows that $(u, v)$ is a classical solution of $\eqref{1.1}$. Taking $x\cdot\nabla u $ and $x\cdot\nabla v$ as test function to system $\eqref{1.1}$ respectively, we have
\begin{align}
&0=\left(-\Delta u +\lambda_1 u-u^3-\beta uv\right)x\cdot\nabla u,\label{p1}\\
&0=\left(-\Delta v +\lambda_2 v-\frac{1}{2}v^2-\frac{1}{2}\beta u^2\right)x\cdot\nabla v.  \label{p2}
\end{align}
By $\eqref{p1}$, we get
\begin{equation}\begin{aligned}
\lambda_1ux\cdot\nabla u&=div(\frac{\lambda_1}{2}x\cdot u^2)-\frac{3\lambda_1}{2}u^2,\nonumber\\
-\Delta ux\cdot\nabla u&=div(-\nabla ux\cdot\nabla u+x\cdot\frac{|\nabla u|^2}{2})-\frac{1}{2}|\nabla u|^2,\nonumber\\
-u^3x\cdot\nabla u&=div(-\frac{1}{4}x\cdot u^4)+\frac{3}{4}u^4,\nonumber\\
-\beta uvx\cdot\nabla u&=div(-\frac{\beta}{2}x\cdot u^2v)+\frac{3\beta}{2}u^2v+\frac{\beta}{2}u^2x\cdot\nabla v.\nonumber
\end{aligned}
	\end{equation}
Then, it follows that
\begin{eqnarray*}
\int_{\mathbb{R}^{3}}\left[\frac{1}{2}|\nabla u|^2+\frac{3\lambda_1}{2}u^2-\frac{3}{4}u^4-\frac{3\beta}{2}u^2v-\frac{\beta}{2}u^2x\cdot\nabla v\right]dx=0.
\end{eqnarray*}
Taking into account \eqref{p2}, we similarly deduce that
\begin{eqnarray*}
\int_{\mathbb{R}^{3}}\left[\frac{1}{2}|\nabla v|^2+\frac{3\lambda_2}{2}v^2-\frac{1}{2}v^3+\frac{\beta}{2}u^2x\cdot\nabla v\right]dx=0.
\end{eqnarray*}
Therefore, we establish the following Pohozaev identity
\begin{equation*}
\frac{1}{2}\int_{\mathbb{R}^{3}}(|\nabla u|^2+|\nabla v|^2)dx+\frac{3\lambda_1}{2}\int_{\mathbb{R}^{3}}u^2dx+\frac{3\lambda_2}{2}\int_{\mathbb{R}^{3}}v^2dx
-\frac{3}{4}\int_{\mathbb{R}^{3}}u^4dx-\frac{1}{2}\int_{\mathbb{R}^{3}}v^3dx-\frac{3\beta}{2}\int_{\mathbb{R}^{3}}u^2vdx=0.\label{p}
\end{equation*}
Taking $u$ and $v$ as test functions for system $\eqref{1.1}$ respectively, we obtain
\begin{eqnarray*}
&\int_{\mathbb{R}^{3}}\left[|\nabla u|^2+\lambda_1u^2-u^4-\beta u^2v\right]dx=0,\label{po}\\
&\int_{\mathbb{R}^{3}}\left[|\nabla v|^2+\lambda_2v^2-\frac{1}{2}v^3-\frac{1}{2}\beta u^2v\right]dx=0.\label{p4}
\end{eqnarray*}
Thus, we deduce
\begin{equation*}
\int_{\mathbb{R}^{3}}(|\nabla u|^2+|\nabla v|^2)dx-\frac{3}{4}\int_{\mathbb{R}^{3}}u^4dx-\frac{1}{4}\int_{\mathbb{R}^{3}}v^3dx-\frac{3}{4}\beta\int_{\mathbb{R}^{3}}u^2vdx=0.
\end{equation*}
This completes the proof.
\end{proof}
\begin{remark}
By a direct calculation
$$
\frac{d}{dt}J\left(t^{\frac{3}{2}}u(tx),t^{\frac{3}{2}}v(tx)\right)=t\int_{\mathbb{R}^{3}}(|\nabla u|^2+|\nabla v|^2)dx-\frac{3}{4}t^{2}\int_{\mathbb{R}^{3}}u^4dx-\frac{1}{4}t^{\frac{1}{2}}\int_{\mathbb{R}^{3}}v^3dx-\frac{3}{4}t^{\frac{1}{2}}\beta\int_{\mathbb{R}^{3}}u^2vdx,
$$
which implies that
$$\frac{d}{dt}J\left(t^{\frac{3}{2}}u(tx),t^{\frac{3}{2}}v(tx)\right)=\frac{1}{t}P\left(t^{\frac{3}{2}}u(tx),t^{\frac{3}{2}}v(tx)\right).
$$
However, we might not be able to employ a proof similar to \cite{JJLV2022} to show that the local minimizer is a normalized ground state solution.
\end{remark}

Now we denote $K_{a,b}=\left\{(u,v)\in S(a,b):dJ|_{S(a,b)}(u,v)=0\right\}$. From the preceding arguments, it's evident that $K_{a,b}\neq \O$ for any $a\ge0$ and $b\in(0,\bar{b})$. Then, we aim to demonstrate that the following minimization problem
$$
\tilde m(a,b):=\inf\limits_{(u,v)\in K_{a,b}}J(u,v),\quad\forall a\geq0,\ b\in(0,\bar{b})
$$
is well defined.

\begin{lemma}\label{qiangzhi}
		\textnormal{For any $a\ge0$ and $b\in(0,\bar{b})$, $\tilde m(a,b)>-\infty.$}
\end{lemma}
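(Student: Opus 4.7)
The plan is to exploit the Pohozaev--Nehari identity from Lemma~\ref{pohozaev}: every $(u,v)\in K_{a,b}$ is (by Lagrange multipliers) a weak solution of \eqref{1.1}, hence satisfies $P(u,v)=0$. The key observation is that a suitable linear combination of $J$ and $P$ cancels the highest-order nonlinearity $\int u^4\,dx$, and the remaining nonlinearities are controlled by Gagliardo--Nirenberg using sub-quadratic powers of $\|\nabla u\|_2$ and $\|\nabla v\|_2$.

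Concretely, for any $(u,v)\in K_{a,b}$ I would write
\begin{equation*}
J(u,v)=J(u,v)-\tfrac{1}{3}P(u,v)=\tfrac{1}{6}\int_{\mathbb{R}^3}(|\nabla u|^2+|\nabla v|^2)\,dx-\tfrac{1}{12}\int_{\mathbb{R}^3}v^3\,dx-\tfrac{1}{4}\beta\int_{\mathbb{R}^3}u^2 v\,dx,
\end{equation*}
since the coefficient of $\int u^4\,dx$ becomes $-\tfrac{1}{4}+\tfrac{3}{4}\cdot\tfrac{1}{3}=0$. The remaining nonlinear terms I would bound using the Gagliardo--Nirenberg inequality with the $N=3$ exponents $\gamma_3=\tfrac{1}{2}$, $\gamma_4=\tfrac{3}{4}$: the mass constraint $\|v\|_2^2=b$ gives $\|v\|_3^3\le C_{3,3}^3\,b^{3/4}\|\nabla v\|_2^{3/2}$, while Cauchy--Schwarz combined with GN yields
\begin{equation*}
\int_{\mathbb{R}^3}u^2 v\,dx\le \|u\|_4^2\,\|v\|_2\le C_{3,4}^2\,a^{1/4}b^{1/2}\,\|\nabla u\|_2^{3/2}.
\end{equation*}

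Putting these together, I obtain an inequality of the shape
\begin{equation*}
J(u,v)\ge \tfrac{1}{6}\|\nabla u\|_2^2+\tfrac{1}{6}\|\nabla v\|_2^2-A(a,b)\,\|\nabla u\|_2^{3/2}-B(b)\,\|\nabla v\|_2^{3/2},
\end{equation*}
with constants $A(a,b),B(b)>0$ depending only on the prescribed masses. Since $\tfrac{3}{2}<2$, each scalar function $t\mapsto\tfrac{1}{6}t^2-Ct^{3/2}$ is bounded below on $[0,\infty)$ by a finite (negative) constant; applying this separately with $t=\|\nabla u\|_2$ and $t=\|\nabla v\|_2$ yields a uniform lower bound on $J$ over $K_{a,b}$, hence $\tilde m(a,b)>-\infty$.

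I do not foresee a serious obstacle: the argument is essentially algebraic. The only delicate point is selecting the coefficient $\tfrac{1}{3}$ in $J-\tfrac{1}{3}P$ so that the $L^4$ term drops out (this is the analogue in the $L^2$-supercritical regime of the Nehari-based boundedness-from-below used for a single equation, here adapted to the coupled Schr\"odinger--KdV system); once that cancellation is performed, the result follows from the sub-quadratic growth of the surviving cubic and quadratic-linear terms.
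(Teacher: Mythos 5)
Your proposal is correct and follows essentially the same route as the paper: the paper's proof forms exactly the combination $E(u,v)=J(u,v)-\tfrac{1}{3}P(u,v)=\tfrac{1}{6}\int_{\mathbb{R}^3}(|\nabla u|^2+|\nabla v|^2)\,dx-\tfrac{1}{12}\int_{\mathbb{R}^3}v^3\,dx-\tfrac{1}{4}\beta\int_{\mathbb{R}^3}u^2v\,dx$ (legitimate on $K_{a,b}$ since $P$ vanishes on solutions by Lemma~\ref{pohozaev}), and then applies the identical H\"older and Gagliardo--Nirenberg bounds, obtaining $\tfrac{1}{6}\bigl(|\nabla u|_2^2+|\nabla v|_2^2\bigr)-\tfrac{1}{12}C_{3,3}^3 b^{3/4}|\nabla v|_2^{3/2}-\tfrac{1}{4}\beta C_{3,4}^2 a^{1/4}b^{1/2}|\nabla u|_2^{3/2}$, exactly your $A(a,b)$, $B(b)$ constants. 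Your concluding coercivity argument via the sub-quadratic exponent $\tfrac{3}{2}<2$ is also the paper's, so no gap remains.
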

\begin{proof}
For any $a\ge0, b\in(0,\bar{b})$ and $(u,v)\in S(a,b)$, combining the fact that $P(u,v)=0$ with the definition of $J(u,v)$, by the H\"older and Gagliardo-Nirenberg inequality, we can obtain
\begin{equation}\begin{aligned}
  E(u,v)=&\frac{1}{6}\int_{\mathbb{R}^{3}}(|\nabla u|^2+|\nabla v|^2)dx-\frac{1}{12}\int_{\mathbb{R}^{3}}v^3dx-\frac{1}{4}\beta\int_{\mathbb{R}^{3}}u^2vdx\\
          \geq&\frac{1}{6}\left(|\nabla u|_{2}^2+|\nabla v|_{2}^2\right)-\frac{1}{12}|v|_{3}^3-\frac{1}{4}\beta|u|_{4}^2|v|_{2}\\
		  \geq&\frac{1}{6}\left(|\nabla u|_{2}^2+|\nabla v|_{2}^2\right)-\frac{1}{12}C_{3,3}^3|\nabla v|_{2}^{\frac{3}{2}}|v|_{2}^{\frac{3}{2}}-\frac{1}{4}\beta C_{3,4}^2|\nabla u|_{2}^{\frac{3}{2}}|u|_{2}^{\frac{1}{2}}|v|_{2}\\
          =&\frac{1}{6}\left(|\nabla u|_{2}^2+|\nabla v|_{2}^2\right)-\frac{1}{12}C_{3,3}^3b^{\frac{3}{4}}|\nabla v|_{2}^{\frac{3}{2}}-\frac{1}{4}\beta C_{3,4}^2a^{\frac{1}{4}}b^{\frac{1}{2}}|\nabla u|_{2}^{\frac{3}{2}}.\label{3.3.1}
\nonumber\end{aligned}
	\end{equation}
Thus, $E(u,v)$ is coercive and bounded from below. In particular,  $\tilde m(a,b)>-\infty$.
\end{proof}

Now, we shall further characterize the behavior of $\tilde m(a,b)$. By choosing $\bar b>0$ to be sufficiently small and noting that $\tilde m(a,b)\le m(a,b)<0$, we can deduce that $(u,v)\in B_{6b}$ if $\tilde m(a,b)$ is attained at $(u,v)\in S(a,b)$. This implies that $\tilde m(a,b)=m(a,b)$. Additionally, we can establish the following result:

\begin{lemma}\label{3.2.2}
\begin{description}
\item[(i)] $\tilde m(a,b)$ is continuous for $a\ge0$ and $b\in(0,\bar{b})$.
\item[(ii)] If $0\leq c\leq a, 0<d<b<\bar{b}$, then $\tilde m(a,b) \leq \tilde m(c,d)+\tilde m(a-c,b-d)$.
\end{description}
	\end{lemma}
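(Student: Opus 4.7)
My plan is to reduce both parts of the lemma to the corresponding properties of $m(a,b)$ established in Section~\ref{local}, by invoking the identity $\tilde m(a,b)=m(a,b)$ that is asserted in the paragraph immediately preceding the statement.

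First I would verify that identity rigorously. The inequality $\tilde m(a,b)\le m(a,b)$ is immediate, because the local minimizer produced by Lemma~\ref{3.2.1} belongs to $K_{a,b}$ and realizes $J=m(a,b)$. For the reverse inequality I would pick a sequence $\{(u_n,v_n)\}\subset K_{a,b}$ with $J(u_n,v_n)\to\tilde m(a,b)\le m(a,b)<0$. Since $P(u_n,v_n)=0$ by Lemma~\ref{pohozaev}, one may rewrite $J(u_n,v_n)=J(u_n,v_n)-\tfrac{1}{3}P(u_n,v_n)=E(u_n,v_n)$, and the coercivity estimate from the proof of Lemma~\ref{qiangzhi} yields
$$
J(u_n,v_n)\ \ge\ \tfrac{1}{6}R_n^2-C(a,b)\,R_n^{3/2},\qquad R_n^2:=\|\nabla u_n\|_2^2+\|\nabla v_n\|_2^2,
$$
with $C(a,b)=\tfrac{1}{12}C_{3,3}^3 b^{3/4}+\tfrac{1}{4}\beta C_{3,4}^2 a^{1/4}b^{1/2}\to 0$ as $b\to 0^+$. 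Hence $R_n^2$ is bounded by a quantity of order $b$, and by shrinking $\bar b$ further (depending on $a$ and $\beta$) one ensures $R_n^2\le 6b$ for all large $n$. Then $(u_n,v_n)\in S(a,b)\cap B_{6b}$, so $J(u_n,v_n)\ge m(a,b)$, and passing to the limit gives $\tilde m(a,b)\ge m(a,b)$.

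With the identity $\tilde m=m$ in hand, part~(i) is a direct restatement of Lemma~\ref{3.1.3} and part~(ii) is a direct restatement of Lemma~\ref{3.1.4} in the admissible parameter range $0\le c\le a$, $0<d<b<\bar b$. The only substantive obstacle is the coercivity bookkeeping that forces minimizing sequences for $\tilde m$ into $B_{6b}$; once this step is secured, the remaining assertions follow automatically from the corresponding results for $m(a,b)$ without any additional work.
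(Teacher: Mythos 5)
Your proposal is correct and takes essentially the same route as the paper: the paper offers no separate proof of Lemma \ref{3.2.2}, relying instead on the identity $\tilde m(a,b)=m(a,b)$ asserted in the preceding paragraph (obtained by shrinking $\bar b$ so that, via $P(u,v)=0$ and the coercivity estimate of Lemma \ref{qiangzhi}, critical points with negative energy are forced into $B_{6b}$), after which (i) and (ii) are inherited directly from Lemmas \ref{3.1.3} and \ref{3.1.4}. Your only refinement is to run this reduction at the level of minimizing sequences in $K_{a,b}$ rather than assuming the infimum is attained, which is a more careful rendering of the same argument.
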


{\it {Completion of proof of Theorem \ref{thm: main ex}.}}
Let $\{(u_n,v_n)\}\subset K_{a,b}$ be a minimizing sequence of $J$ at $\tilde m(a,b)$. Clearly, $\{(u_n,v_n)\}$ is bounded in $W$. Using the fact that $\tilde m(a,b)<0$, it follows that,
there exists $\delta_1>0$ and a sequence $\{y_n\}\subset \mathbb{R}^3$ such that
\begin{equation*}
\limsup_{n\rightarrow\infty}\int_{B_2(y_n)}|(u_n,v_n)|^2dx\geq~\delta_1>0.
\end{equation*}
Following similar arguments as above, we infer that there exists a sequence $\{y_n\}\subset \mathbb{R}^{3}$ such that $$\left(u_n(x+y_n),v_n(x+y_n)\right)\rightarrow (u,v)\in S(a,b)\cap W,
$$
 which implies that
 there exists a minimizer $(u,v)$ for $J$ on $K_{a,b}$. Hence, in view of Lemma \ref{2.0.13}, there exist $\lambda_1,\lambda_2>0$ such that $(u,v)$ is a normalized ground state solution of \eqref{1.1}. The proof is complete.
\\
\\
\\
\\
\\

\noindent \textbf{Ethics approval and consent to participate} Not applicable.
\\

\noindent \textbf{Consent for publication} Written informed consent for publication was obtained from all participants.
\\

\noindent \textbf{Availability of data and material} Not applicable.
\\

\noindent \textbf{Competing interests} The authors declare that they have no conflict of interest to this work.
\\

\noindent \textbf{Funding} This work is supported by National Natural Science Foundation of China (No.12471102, 11971095) and the Beijing Natural Science Foundation (No. 1242007).
\\

\noindent \textbf{Authors' contributions} These authors have contributed equally to the work.
\\

\noindent \textbf{Acknowledgements} The authors are very grateful to the referees for their detailed comments and valuable suggestions, which greatly improved the manuscript.
\\



\noindent\\Qian Gao
\\School of Mathematics and Statistics
\\Northeast Normal University
\\ Changchun 130024
\\China
\\e-mail:gaoq999@nenu.edu.cn
\\
\noindent\\Qun Wang
\\School of Mathematics and Statistics
\\Northeast Normal University
\\ Changchun 130024
\\China
\\e-mail:wq19980228@163.com
\\
\noindent\\Xiaojun Chang
\\School of Mathematics and Statistics \& Center for Mathematics and Interdisciplinary Sciences
\\Northeast Normal University
\\ Changchun 130024
\\China
\\e-mail:changxj100@nenu.edu.cn

\end{document}